\newtheorem{theorem}{Theorem}[section]
\newtheorem{corollary}[theorem]{Corollary}
\newtheorem{lemma}[theorem]{Lemma}
\newtheorem{prop}[theorem]{Proposition}
\newtheorem{proposition}[theorem]{Proposition}
\newtheorem{conjecture}[theorem]{Conjecture}
\DeclareMathOperator{\End}{End}
\DeclareMathOperator{\Tr}{Tr}
\DeclareMathOperator{\Aut}{Aut}
\DeclareMathOperator{\Cl}{Cl}
\DeclareMathOperator{\GL}{GL}
\DeclareMathOperator{\SL}{SL}
\DeclareMathOperator{\Gal}{Gal}
\DeclareMathOperator{\norm}{Norm}
\DeclareMathOperator{\ord}{\upsilon}
\newcommand{\Frob}{\mathrm{Frob}}
\newcommand{\cA}{\mathcal{A}}
\newcommand{\cG}{\mathcal{G}}
\newcommand{\cH}{\mathcal{H}}
\newcommand{\OO}{{\mathcal O}}
\newcommand{\ga}{\mathfrak{a}}
\newcommand{\ff}{\mathfrak{f}}
\newcommand{\fp}{\mathfrak{m}}
\newcommand{\gp}{\mathfrak{p}}
\newcommand{\fq}{\mathfrak{q}}
\newcommand{\fr}{\mathfrak{r}}
\newcommand{\mP}{\mathfrak{P}}
\newcommand{\ZK}{\mathbb{Z}_K}
\newcommand{\C}{\mathbb C}
\newcommand{\R}{\mathbb R}
\newcommand{\Q}{\mathbb Q}
\newcommand{\Z}{\mathbb Z}
\newcommand{\F}{\mathbb F}
\newcommand{\A}{\mathbb A}
\newcommand{\fN}{\mathfrak{N}}
\def\O{{\mathcal O}}
\def\p{{\mathfrak p}}
\def\q{{\mathfrak q}}
\begin{document}

\title[asymptotic Fermat]{On the asymptotic Fermat's Last Theorem\\ over number fields}
\author{Mehmet Haluk \c{S}eng\"un}
\address{\normalfont{University of Sheffield,
School of Mathematics and Statistics, Hicks Building,
Sheffield, UK}}
\email{m.sengun@sheffield.ac.uk}
\author{Samir Siksek}
\address{\normalfont{University of Warwick,
Mathematics Institute, Coventry, UK}}
\email{samir.siksek@gmail.com}

 \begin{abstract} 
Let $K$ be a number field, $S$ be the set of primes of $K$
above $2$ and $T$ the subset of primes above $2$ having inertial
degree $1$. Suppose that $T \ne \emptyset$, and moreover, that
for every
solution $(\lambda,\mu)$ to the $S$-unit equation
\[
\lambda+\mu=1, \qquad \lambda,~\mu \in \OO_S^\times,
\]
there is some $\mP \in T$ such that 
$\max\{ \ord_\mP(\lambda),\ord_\mP(\mu)\} \le 4 \ord_\mP(2)$.
Assuming two deep but standard conjectures 
from the Langlands programme, we prove the asymptotic
Fermat's Last Theorem over $K$: there is some $B_K$
such that for all prime exponents $p>B_K$ the only solutions
to $x^p+y^p+z^p=0$ with $x$, $y$, $z \in K$ satisfy $xyz=0$.
We deduce that the asymptotic Fermat's Last Theorem holds
for imaginary quadratic fields $\Q(\sqrt{-d})$
with $-d \equiv 2$, $3 \pmod{4}$ squarefree.
\end{abstract}
 
\maketitle

\section{Introduction}
 Dickson, in his \emph{History of the Theory of Numbers}
(\cite[pages 758 and 768]{Dickson}) gives a survey
of early work on the Fermat equation over number fields,
with the earliest reference being 
to  the work of Maillet (1897).
Over a period of almost a century, 
number theorists have intermittently sought extensions of Kummer's 
cyclotomic approach 
to the
setting of number fields.
Perhaps the most satisfying work
in that direction is that of Hao and Parry \cite{HP},
who prove several results on the Fermat equation
over quadratic fields subject to a regularity condition
on the prime exponent $p$ (as for $\Q$ one does not know how to 
prove that there are infinitely many regular primes).

In view of Wiles' remarkable proof of Fermat's Last Theorem,
it is now more natural to attack the Fermat equation 
over number fields via Frey curves and modularity.
Jarvis and Meekin \cite{JMee}
did just this, proving
Fermat's Last Theorem over $\Q(\sqrt{2})$. They were followed
by Freitas and Siksek \cite{FS2} who proved Fermat's Last Theorem
for various
real quadratic fields of small discriminant. In another work,
Freitas and Siksek \cite{FS} proved the asymptotic version 
of Fermat's Last Theorem (explained below) for totally real fields satisfying
some auxiliary conditions. Key to these successes is the 
extraordinary progress in modularity over totally real fields,
due to the efforts of Barnett-Lamb, Breuil, Diamond, Gee, Geraghty, Kisin,
Skinner, Taylor, Wiles, and others. Alas our understanding of
modularity (or automorphy) in the setting of general
number fields is largely conjectural. One can ask if it is
possible to replicate the aforementioned successes for the 
Fermat equation over general number fields, by assuming 
standard conjectures. The purpose of this paper
is to address this question, and to highlight 
additional
challenges that arise in the general number field setting.

\medskip

Let $K$ be an algebraic number field.
To keep this Introduction self-contained we relegate the precise
statements of the two conjectures we assume to later sections,
and now only briefly indicate what they are.
\begin{itemize}
\item Conjecture~\ref{conj:Serre}: this is a weak version of
 Serre's modularity conjecture (\cite{gee_etal}) for odd, irreducible, 
continuous $2$-dimensional mod $p$ representations of Gal$(\overline{\Q}/K)$ 
that are finite flat at every prime over $p$. 
\item Conjecture~\ref{conj:ES}: this is a conjecture
in the Langlands Programme (see \cite{taylor})
which says that every weight $2$ newform (for $\GL_2$) over $K$ with 
integer Hecke eigenvalues has an associated elliptic 
curve over $K$ or a fake elliptic curve over $K$. 
\end{itemize}

To state our main result, we need to set up some notation.
Write $\ZK$ for the ring of integers of $K$.
Let $S$ for the set of primes $\mP$ of $\ZK$ above $2$,
and let $T$ be the subset of $\mP \in S$ 
with inertial degree $1$ (or equivalently with residue class field $\F_2$). 
We consider the Fermat
equation
\begin{equation}\label{eqn:Fermat}
x^p+y^p+z^p=0
\end{equation}
with $x$, $y$, $z \in K$ and prime exponent $p$. 
We say that a solution $(x,y,z)=(a,b,c) \in K^3$
is non-trivial if $abc \ne 0$. 
 
\begin{theorem}\label{thm:FermatGen}
Let $K$ be a number field for which Conjectures~\ref{conj:Serre}
and~\ref{conj:ES} hold. 
Let $S$, $T$ be as above and suppose $T \ne \emptyset$.
Write $\OO_S^\times$ for the set of $S$-units of $K$.
Suppose that for every solution $(\lambda,\mu)$ to the $S$-unit equation
\begin{equation}\label{eqn:sunit}
\lambda+\mu=1, \qquad \lambda,\, \mu \in \OO_S^\times \, .
\end{equation}
there is
some $\mP \in T$ that satisfies
$\max\{ \lvert \ord_{\mP} (\lambda) \rvert, \lvert \ord_{\mP}(\mu) \rvert \} 
\le 4 \ord_{\mP}(2)$.
Then the asymptotic Fermat's Last Theorem holds for $K$:
 there is some constant $B_K$ such that 
the Fermat equation \eqref{eqn:Fermat}
has no non-trivial solutions with prime exponent $p>B_K$.
\end{theorem}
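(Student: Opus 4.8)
The plan is to follow the Frey-curve-and-modularity strategy that succeeded over totally real fields in \cite{FS}, adapting each step to the general number field setting where the automorphy input is only conjectural. Suppose $(a,b,c)$ is a non-trivial solution to \eqref{eqn:Fermat} with prime exponent $p$; after scaling we may assume $a,b,c$ are integral and coprime away from $S$. To such a solution we attach the Frey elliptic curve $E = E_{a,b,c}$ over $K$ given by $Y^2 = X(X-a^p)(X+b^p)$, and study the mod $p$ Galois representation $\overline{\rho}_{E,p} \colon \Gal(\overline{\Q}/K) \to \GL_2(\F_p)$ on its $p$-torsion. The first block of work is the standard local analysis: computing the conductor of $E$ and showing that $\overline{\rho}_{E,p}$ is unramified outside $S$ and the primes of additive-but-potentially-multiplicative reduction, and is finite flat at every $\mP \mid p$ (for $p$ large, using that $\mathrm{v}_\mP(a^p b^p c^p)$ is divisible by $p$). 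One also needs, for $p$ large, that $\overline{\rho}_{E,p}$ is irreducible; this is the place where one invokes an effective isogeny/uniform-boundedness-type input so that $B_K$ exists, and it is a genuine but by-now-routine obstacle over a fixed number field.

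Next I would apply Conjecture~\ref{conj:Serre}: since $\overline{\rho}_{E,p}$ is odd, irreducible, continuous, $2$-dimensional, and finite flat at the primes above $p$, it is modular, arising from a weight $2$ (mod $p$) eigenform over $K$ of level $\mathfrak{N}$ equal to the prime-to-$p$ part of the conductor of $E$, which divides a bounded ideal supported on $S$ and the Frey primes. A level-lowering / multiplicity argument (the analogue of Ribet's theorem, here built into the conjectural Serre framework as in \cite{gee_etal}) then removes the Frey primes from the level, leaving an eigenform of level dividing a fixed ideal $\mathfrak{N}_S$ supported only on $S$. The space of such mod $p$ eigenforms, as $p$ varies, is controlled by a single characteristic-zero space of weight $2$ newforms of level $\mathfrak{N}_S$ together with torsion classes; for $p$ exceeding a bound depending only on $K$, $\overline{\rho}_{E,p}$ must match the reduction of a \emph{characteristic zero} newform $\mathfrak{f}$ of level dividing $\mathfrak{N}_S$ with (necessarily) integer Hecke eigenvalues, since the coefficient field is pinned down by the $2$-dimensionality and the eigenvalue congruences at enough primes. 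Here is where $T \ne \emptyset$ enters: one shows the relevant spaces are nonzero only in a controlled way, and in any case one produces such an $\mathfrak{f}$.

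Now invoke Conjecture~\ref{conj:ES}: the newform $\mathfrak{f}$ has an associated elliptic curve $E'/K$ (or a fake elliptic curve, i.e.\ an abelian surface with quaternionic multiplication, which one can exclude or handle separately) of conductor dividing $\mathfrak{N}_S$, so $E'$ has good reduction outside $S$. For $p$ large we get an isomorphism $\overline{\rho}_{E,p} \cong \overline{\rho}_{E',p}$. The final step is the arithmetic contradiction, and this is where the hypothesis on the $S$-unit equation does its work: the curve $E'$ has potentially multiplicative reduction at some $\mP$ (coming from the Frey primes being absorbed) or we compare the two curves at a prime $\mP \in T$. Concretely, $\overline{\rho}_{E,p}\cong\overline{\rho}_{E',p}$ forces a congruence between the $\mP$-adic valuations of the discriminants (or $j$-invariants): $p \mid \mathrm{v}_\mP(j_{E'})$ unless $E'$ has potentially good reduction at $\mP$. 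Since $E'$ has conductor supported on $S$, its $j$-invariant, together with the relation $a^p + b^p + c^p = 0$, produces a solution $(\lambda,\mu)$ to the $S$-unit equation \eqref{eqn:sunit} with $\lambda = -a^p/c^p$, $\mu = -b^p/c^p$ (up to the usual permutations), so that $\mathrm{v}_\mP(\lambda)$ and $\mathrm{v}_\mP(\mu)$ are both divisible by $p$. By hypothesis there is $\mP \in T$ with $\max\{|\mathrm{v}_\mP(\lambda)|,|\mathrm{v}_\mP(\mu)|\} \le 4\,\mathrm{v}_\mP(2)$; for $p$ larger than $4\,\mathrm{v}_\mP(2)$ this forces $\mathrm{v}_\mP(\lambda)=\mathrm{v}_\mP(\mu)=0$, i.e.\ the Frey curve has good or multiplicative (not additive) reduction at $\mP$, and since the residue field at $\mP \in T$ is $\F_2$ one checks directly (this is the role of the residue field being $\F_2$, as in \cite{JMee, FS2}) that $\overline{\rho}_{E,p}$ and $\overline{\rho}_{E',p}$ have incompatible behaviour at $\mP$ — the Frey curve's image at $\mP$ contains an element of order $p$ while $E'$ with good reduction at a prime of norm $2$ cannot, for $p$ large. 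This contradiction shows no non-trivial solution exists once $p > B_K$.

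The main obstacle I expect is not any single step in isolation but the bookkeeping that makes $B_K$ \emph{uniform}: one must simultaneously (i) bound the primes $p$ for which $\overline{\rho}_{E,p}$ can be reducible (an effective isogeny bound over the fixed field $K$), (ii) bound the torsion in the relevant cohomology so that mod $p$ eigenforms of level $\mathfrak{N}_S$ lift to characteristic zero for $p$ large, and (iii) bound the finitely many $S$-unit solutions and the quantities $4\,\mathrm{v}_\mP(2)$ appearing in the hypothesis. Step (ii) is the subtlest: over general number fields one genuinely needs the conjectural framework, and controlling integral cohomology uniformly in $p$ is the technical heart of reducing to characteristic zero. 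Everything else — the conductor computation for the Frey curve, level lowering, and the final local comparison at a prime of norm $2$ — is a direct adaptation of the totally real arguments of Freitas–Siksek and Jarvis–Meekin, modulo replacing the theorems of Ribet, Wiles, et al.\ by Conjectures~\ref{conj:Serre} and~\ref{conj:ES}.
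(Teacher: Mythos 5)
Your overall skeleton (Frey curve, Conjecture~\ref{conj:Serre}, lifting to characteristic zero controlled by torsion in integral cohomology, Conjecture~\ref{conj:ES}, final local contradiction) matches the paper, but the endgame — the only place where the $S$-unit hypothesis can enter — is wrong as written. You apply the hypothesis to $(\lambda,\mu)=(-a^p/c^p,-b^p/c^p)$, but these are not $S$-units: after scaling, $a,b,c$ are only coprime up to a fixed odd prime $\fp$ representing the class of $\cG_{a,b,c}$, and they may be divisible by arbitrary primes outside $S$. Worse, your conclusion $\ord_\mP(\lambda)=\ord_\mP(\mu)=0$ at $\mP\in T$ is impossible: since the residue field at $\mP$ is $\F_2$, exactly one of $a,b,c$ must be divisible by $\mP$ (else $a^p+b^p+c^p\equiv 1 \pmod{\mP}$), and this is precisely what gives $\ord_\mP(j)=8\ord_\mP(2)-2p\ord_\mP(b)<0$ with $p\nmid\ord_\mP(j)$, hence $p\mid\#\overline{\rho}_{E,p}(I_\mP)$. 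The paper's argument runs in the opposite direction: the $S$-unit hypothesis is applied to the curve $E'$ supplied by the conjectures, not to the Fermat solution. For that one must first show (i) $E'$ has potentially good reduction outside $S$ (if it had potentially multiplicative reduction at $\fp$ one bounds $p$ by comparing $\#\overline{\rho}(I_\fp)\mid 24$ for the Frey curve with Lemma~\ref{lem:inertiaGeneral}); (ii) $E'$, after a $2$-isogeny justified by Katz's theorem (congruences $4\mid\#E'(\F_\fq)$ inherited from the full $2$-torsion of the Frey curve), has full $2$-torsion, so its $\lambda$-invariant satisfies $\lambda,1-\lambda\in\OO_S^\times$; (iii) transporting $p\mid\#\overline{\rho}_{E,p}(I_\mP)$ across $\overline{\rho}_{E,p}\sim\overline{\rho}_{E',p}$ forces $\ord_\mP(j')<0$. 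Then the hypothesis $\max\{|\ord_\mP(\lambda)|,|\ord_\mP(\mu)|\}\le 4\ord_\mP(2)$ yields $\ord_\mP(j')=8\ord_\mP(2)-2t\ge 0$, the desired contradiction. None of steps (i)--(iii) appears in your proposal, and your claim that level lowering leaves a level supported only on $S$ (so that $E'$ has good reduction outside $S$) is unjustified: the Serre conductor may well be divisible by $\fp$, and the paper never removes it.

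A second genuine gap is the fake elliptic curve alternative in Conjecture~\ref{conj:ES}, which you propose to ``exclude or handle separately'' without an argument; for totally complex $K$ this is one of the two essentially new difficulties. The exclusion again hinges on $T\ne\emptyset$: a fake elliptic curve has potential good reduction at every prime, with inertia acting through a group of order dividing $24$ (Theorem~\ref{thm:goodredn}), which is incompatible with $p\mid\#\overline{\rho}_{E,p}(I_\mP)$ for $\mP\in T$ once $p>24$. Relatedly, your justification that the complex eigenform has rational eigenvalues (``pinned down by the $2$-dimensionality'') is not a proof; the correct argument is Mazur's: if $\ff(T_\fq)\notin\Q$ for some fixed $\fq$, the congruence with the integer trace of Frobenius of the Frey curve bounds $p$ by a norm depending only on $\ff$ and $\fq$. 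These omissions are fixable, but as they stand the proposal is missing the mechanism by which the $S$-unit condition, the full $2$-torsion of $E'$, and the inertia comparison at $\mP\in T$ combine to finish the proof.
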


\subsection{Differences from the totally real case}
The reader comparing the statement of our Theorem~\ref{thm:FermatGen}
with that of Theorem 3 of Freitas and Siksek \cite{FS} may incorrectly
(but understandably) presume that the proof is largely the same.
In fact, in addition to making use of ideas in \cite{FS} 
we need to deal with following two additional challenges that do
not arise in the totally real case.
\begin{enumerate}
\item[(i)] For a general number field $K$, Serre's
modularity conjecture relates a representation
$G_K \rightarrow \GL_2(\F_p)$, subject to certain conditions,
to a mod $p$ eigenform of weight $2$ over $K$. If $K$ is totally real,
such a mod $p$ eigenform lifts to a complex eigenform over $K$; this is not
generally
the case for a number field $K$ with complex embeddings.
We show that this difficulty is circumvented
in our asymptotic Fermat setting where the prime exponent $p$
is assumed to be sufficiently large. This step makes the
constant $B_K$ in Theorem~\ref{thm:FermatGen} ineffective, in contrast
to the totally real case. To make this effective we would
need effective bounds for the size of torsion subgroups
of integral cohomology groups associated to
certain locally symmetric spaces (see Section~\ref{sec:lift}).
\item[(ii)] If $K$ has a real embedding, then a weight $2$ 
complex eigenform over $K$ with rational eigenvalues conjecturally corresponds
to an elliptic curve over $K$. 
This is not true if $K$ is totally complex; the
eigenform does sometimes correspond to a fake elliptic curve.
A careful study of images of inertia at primes $\mP \in T$ of the 
mod $p$ representation of the Frey
curve shows that they are incompatible with 
images of inertia for fake elliptic curves.
\end{enumerate}

\subsection{An Octic Example}
We stress that $S$-unit equations have finitely
many solutions and that  there is a practical algorithm
for determining these solutions;
see for example \cite{Smart}. Thus the criterion in Theorem~\ref{thm:FermatGen}
is algorithmically testable. To illustrate this, take
$K=\Q(\zeta_{16})$ where $\zeta_{16}$ is a primitive
$16$-th root of unity. Then $K$ is a totally complex
number field of degree $8$.
Let $\mP=(1-\zeta_{16}) \cdot \Z_K$.
Then $2 \Z_K= \mP^8$. It follows that $S=T=\{\mP\}$.
Smart 
\cite[Section 5]{Smart2} 
determines the solutions to the $S$-unit equation \eqref{eqn:sunit}
for this particular field and finds that there are precisely
$795$ solutions $(\lambda,\mu)$. It turns out that the largest possible
value of 
$\max\{\lvert \ord_\mP(\lambda)\rvert,\lvert\ord_\mP(\mu)\rvert\}$ is $22$, which is smaller
than $4 \ord_\mP(2)=32$. By Theorem~\ref{thm:FermatGen},
assuming Conjectures~\ref{conj:Serre} and~\ref{conj:ES},
the asymptotic Fermat's Last Theorem holds for $K$. 

\subsection{Imaginary Quadratic Fields}
Let $K=\Q(\sqrt{-d})$ be an imaginary quadratic field,
where $d$ is a squarefree positive integer. If $-d \equiv 5 \pmod{8}$
then $2$ is inert in $K$ and so $T =\emptyset$ and 
Theorem~\ref{thm:FermatGen} does not apply.
If $-d \equiv 1 \pmod{8}$ then $2$ splits in $K$ and if 
$-d \equiv 2$ or $3 \pmod{4}$ then it ramifies. Here
we consider the particularly simple case of $-d \equiv 2$
or $3 \pmod{4}$.

\begin{theorem}
Let $K=\Q(\sqrt{-d})$ be an imaginary quadratic field with
where $d$ is a squarefree positive integer 
satisfying $-d \equiv 2$ or $3 \pmod{4}$.
Assume Conjectures~\ref{conj:Serre}
and~\ref{conj:ES}.  Then the asymptotic Fermat's Last Theorem holds for $K$.
\end{theorem}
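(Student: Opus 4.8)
The plan is to derive the theorem from Theorem~\ref{thm:FermatGen}, so the whole task is to verify its hypotheses for $K=\Q(\sqrt{-d})$ with $-d\equiv 2,3\pmod 4$. Since $-d\equiv 2,3\pmod 4$ the discriminant of $K$ is even, so $2$ ramifies: $2\ZK=\mP^2$ for a single prime $\mP$, which has ramification index $2$ and hence residue degree $1$, i.e. residue field $\F_2$. Thus $S=T=\{\mP\}$ is non-empty and $\ord_\mP(2)=2$, so Theorem~\ref{thm:FermatGen} applies as soon as every solution $(\lambda,\mu)$ of the $S$-unit equation \eqref{eqn:sunit} satisfies $\max\{\lvert\ord_\mP(\lambda)\rvert,\lvert\ord_\mP(\mu)\rvert\}\le 4\ord_\mP(2)=8$.

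To check this I would rewrite a solution as a triple: put $(x,y,z)=(\lambda,\mu,-1)$, so $x+y+z=0$ with $x,y,z\in\OO_S^\times$, and observe that
\[
\max\{\lvert\ord_\mP(\lambda)\rvert,\lvert\ord_\mP(\mu)\rvert\}\;=\;\max\{\ord_\mP x,\ord_\mP y,\ord_\mP z\}-\min\{\ord_\mP x,\ord_\mP y,\ord_\mP z\},
\]
a quantity unchanged if we rescale $(x,y,z)$ by an $S$-unit or permute its coordinates. Because $S=\{\mP\}$, every $w\in\OO_S^\times$ satisfies $(w)=\mP^{\ord_\mP(w)}$; and the ultrametric inequality applied to $x+y+z=0$ forces the minimum of the three valuations to be attained at least twice, so after permuting we may assume $\ord_\mP x=\ord_\mP y\le\ord_\mP z$. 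Then $(x)=(y)$, so $u:=x/y$ lies in $\OO_K^\times$, which for an imaginary quadratic field is just the group of roots of unity; and $z=-y(1+u)$ is a nonzero $S$-unit, whence $u\ne -1$. Consequently
\[
\max\{\lvert\ord_\mP(\lambda)\rvert,\lvert\ord_\mP(\mu)\rvert\}\;=\;\ord_\mP(z)-\ord_\mP(y)\;=\;\ord_\mP(1+u),\qquad u\in\OO_K^\times\setminus\{-1\}.
\]

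It then remains to bound $\ord_\mP(1+u)$ over roots of unity $u\ne -1$. The congruence $-d\equiv 2,3\pmod 4$ excludes $\Q(\sqrt{-3})$, so the only field in the family with $\OO_K^\times\ne\{\pm 1\}$ is $\Q(i)$. When $\OO_K^\times=\{\pm 1\}$ we must have $u=1$ and $\ord_\mP(1+u)=\ord_\mP(2)=2$; when $K=\Q(i)$ we have $u\in\{1,i,-i\}$, and since $\ord_\mP(1\pm i)=1$ and $\ord_\mP(2)=2$ we again get $\ord_\mP(1+u)\le 2$. In every case $\ord_\mP(1+u)\le 2<8$, so the hypothesis of Theorem~\ref{thm:FermatGen} holds and the conclusion follows. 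The only point needing any care is the structural observation in the second paragraph — that an $S$-unit of $\mP$-valuation $0$ is a root of unity when $S$ consists of the single prime $\mP$ — after which the $S$-unit equation over these fields is essentially trivial and the slack between the true bound $2$ and the required bound $4\ord_\mP(2)=8$ leaves no room for an obstacle.
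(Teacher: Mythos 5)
Your proof is correct, but it takes a different route from the paper's. The paper argues as follows: for $d>2$ the unit group is $\{\pm 1\}$ and $\mP$ is shown to be non-principal (no element of norm $2$), so every $S$-unit is of the form $\pm 2^r$; the equation $\lambda+\mu=1$ then has only the three solutions $(2,-1)$, $(-1,2)$, $(1/2,1/2)$, all with valuation spread $2\le 8$, and the cases $d=1,2$ (where $\mP$ is principal and, for $d=1$, there are extra units) are dismissed as ``similar''. You instead never determine the solutions: you symmetrize to $x+y+z=0$ with $x,y,z\in\OO_S^\times$, use the ultrametric inequality and the fact that $S=\{\mP\}$ to write the valuation spread as $\ord_\mP(1+u)$ for a unit $u\ne -1$, and then bound this over the roots of unity of $K$ (only $\Q(i)$ contributes anything beyond $u=1$, since $\Q(\sqrt{-3})$ is excluded by the congruence on $-d$). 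The trade-off: the paper's argument is shorter when $\mP$ is non-principal because the $S$-unit group is then visibly $\pm 2^{\Z}$, but it needs the principal cases $d=1,2$ handled separately; your argument is uniform in $d$, is insensitive to whether $\mP$ is principal, and makes clear that the true bound is $2$, far below the required $4\ord_\mP(2)=8$. All the individual steps you use (the identity expressing $\max\{\lvert\ord_\mP\lambda\rvert,\lvert\ord_\mP\mu\rvert\}$ as $\max-\min$ of the three valuations, the fact that an $S$-unit with $\mP$-valuation zero is a unit hence a root of unity, and the valuations $\ord_\mP(1\pm i)=1$, $\ord_\mP(2)=2$) check out, so the hypotheses of Theorem~\ref{thm:FermatGen} are verified and the deduction is complete.
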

\begin{proof}
Note that $S=T=\{\mP\}$ where $\mP^2=2 \ZK$. 
Suppose first that $d>2$.
The assumptions ensure that the only units in $K$ are $\pm 1$.
If $\mP=(a+b\sqrt{-d})$ is principal then
$a^2+d b^2=2$ giving a contradiction. 
Thus $\mP$ is not principal.
Hence if $(\lambda,\mu)$ is any solution to the $S$-unit equation \eqref{eqn:sunit}
then $\lambda=\pm 2^r$, $\mu=\pm 2^s$ with $r$, $s \in \Z$. We quickly deduce that
$(\lambda,\mu)=(2,-1)$ or $(-1,2)$ or $(1/2,1/2)$. In particular, all solutions satisfy
$\max\{ \lvert \ord_{\mP} (\lambda) \rvert, \lvert \ord_{\mP}(\mu) \rvert \} 
\le 4 \ord_{\mP}(2)$.
The proof is complete by Theorem~\ref{thm:FermatGen} for $d>2$.
The cases $d=1$, $2$ are similar.
\end{proof}
It is straightforward, though somewhat lengthy,
 to adapt the method of \cite[Sections 6--7]{FS}
to deduce that the asymptotic Fermat's Last Theorem holds
for $5/6$ of imaginary quadratic fields, assuming 
Conjectures~\ref{conj:Serre} and~\ref{conj:ES}.

We are indebted to the referee for suggesting several
corrections.

\section{Eigenforms for $\GL_2$ over number fields} \label{sec:modularforms}
In this section, we discuss modular forms, both complex and mod $p$, from a 
perspective that will be most useful for us. 
Let $K$ be an algebraic number field with ring of integers $\ZK$ and signature
$(r,s)$. Let $\hat{\Z}_K$ be the finite ad\`eles of $\ZK$ and let $\A_K,
\A^f_K$ denote the rings of ad\`eles and of finite ad\`eles of $K$, respectively. We
let $\mathcal{H}_2^{\pm}$ denote the union of the upper and lower half planes
and $\mathcal{H}_3$ denote the hyperbolic 
$3$-space. Then $\GL_2(K)$ acts on 
$X=\left ( \mathcal{H}_2^{\pm} \right )^r \times \mathcal{H}_3^s$ 
via the embedding $\GL_2(K) \hookrightarrow \GL_2(K \otimes \R) \simeq \GL_2(\R)^r \times \GL_2(\C)^s$.
Fix an ideal $\mathfrak{N} \subseteq \ZK$ and define the compact open subgroup 
\[
U_0(\fN):= \left \{ \gamma \in \GL_2(\widehat{\Z}_K):\gamma\equiv\begin{pmatrix}*&*\\ 0&*\end{pmatrix} \bmod \mathfrak{N} \right \}. 
\]
Consider the adelic locally symmetric space
\[
Y_0(\fN) = \GL_2(K) \backslash \left ( \left ( \GL_2(\A^f_K) / U_0(\fN) \right ) \times X \right ).
\] 
This space is a disjoint union of Riemannian $(2r+3s)$-folds
\[
Y_0(\fN) = \bigsqcup_{j=1}^h \Gamma_j \backslash X
\]
where $\Gamma_j$ are arithmetic subgroups of $\GL_2(K)$, with $\Gamma_1$ being the usual congruence subgroup 
$\Gamma_0(\fN)$ of the modular group $\GL_2(\ZK)$, and $h$ is the class number of $K$.

 For $i \in \{ 0, \hdots, 2r+3s \}$, consider the $i$-th cohomology group
$H^i(Y_0(\fN), \C)$.
For every prime $\q$ coprime to the level $\fN$, we can construct a linear
endomorphism $T_\q$ of $H^i(Y_0(\fN), \C)$ (called a Hecke operator) and these
operators commute with each other. We let $\mathbb{T}^{(i)}_\C(\fN)$ denote the
commutative $\Z$-algebra generated by these Hecke operators inside the
endomorphism algebra of $H^i(Y_0(\fN), \C)$. 

For the purposes of this paper, a {\bf (weight $2$) complex eigenform $f$ over
$K$ of degree $i$ and level $\fN$} is a ring homomorphism $\ff:
\mathbb{T}^{(i)}_\C(\fN) \rightarrow \C$.  Note that the values of $\ff$ are
algebraic integers and they generate a number field which we shall denote
$\Q_\ff$. We shall call a complex eigenform {\bf trivial} if we have
$\ff(T_\q)= \mathbf{N}\q +1$ for all primes $\q$ coprime to the
level\footnote{In the setting of $\GL_2$, non-triviality amounts to
cuspidality.}. We call two complex eigenforms $\ff,\mathfrak{g}$ with possibly
different degrees and levels \textbf{equivalent} if $\ff(T_\q) =
\mathfrak{g}(T_\q)$ for almost all prime ideals $\q$ (notice that the two
Hecke operators $T_\q$ may live in different Hecke algebras). A complex eigenform, say of
level $\fN$, is called {\bf new} if it is not equivalent to one whose level is
a proper divisor of $\fN$. 

Now let $p$ be a rational prime unramified in $K$ and coprime to the level. The
cohomology group $H^i(Y_0(\fN),\overline{\F}_p)$ also comes equipped with Hecke
operators, still denoted $T_\q$ (we only consider these for primes $\q$ 
coprime to $p\fN$). We shall denote the corresponding algebra by
$\mathbb{T}^{(i)}_{\overline{\F}_p}(\fN)$. A {\bf (weight $2$) mod $p$
eigenform $\theta$ over $K$ of degree $i$ and level $\fN$} is a ring
homomorphism $\theta : \mathbb{T}^{(i)}_{\overline{\F}_p}(\fN) \rightarrow
\overline{\F}_p$.

%

\subsection{Lifting mod $p$ eigenforms}\label{sec:lift}
We say that a mod $p$ eigenform $\theta$, say of level $\fN$, lifts to a
complex eigenform if there is a complex eigenform $\ff$ of the same degree and
level and a prime ideal $\mathfrak{p}$ of $\Q_\ff$ over $p$ such
that for every prime $\q$ of $K$ coprime to $p\fN$ we have 
$\theta(T_\q) \equiv \overline{\ff(T_\q)} \pmod{\mathfrak{p}}$. 

A very intriguing aspect of the theory is that in general mod $p$ eigenforms do
not lift to complex ones. The obstruction to lifting is given by $p$-torsion in
the integral cohomology as we now explain.  The long exact sequence associated
to the multiplication-by-$p$ short exact sequence $0 \rightarrow \Z
\xrightarrow{\cdot p} \Z \rightarrow \F_p \rightarrow 0$ gives rise to the
following short exact sequences
$$0 \rightarrow H^i(Y_0(\fN),\Z) \otimes \F_p \rightarrow H^i(Y_0(\fN),\F_p) \rightarrow H^{i+1}(Y_0(\fN), \Z)[p] \rightarrow 0,$$
where $H^{i+1}(Y_0(\fN), \Z)[p]$ denotes the $p$-torsion subgroup of $H^{i+1}(Y_0(\fN), \Z)$. Hence we see that $p$-torsion of $H^{i+1}(Y_0(\fN), \Z)$ vanishes if and only if the reduction map 
from $H^i(Y_0(\fN),\Z)$ to $H^i(Y_0(\fN),\F_p)$ is surjective.  Now, the
existence of an eigenform (complex or mod $p$) is equivalent to the existence
of a class in the corresponding cohomology group that is a simultaneous
eigenvector for the Hecke operators such that its eigenvalues match the values
of the eigenform. With this interpretation, we can utilize the lifting lemmas
of Ash and Stevens \cite[Section 1.2]{ash-stevens} and deduce that every mod
$p$ eigenform of degree $i$ lifts to a complex one when $H^{i+1}(Y_0(\fN), \Z)$
has no $p$-torsion.

The integral cohomology groups $H^i(Y_0(\fN), \Z)$ are well known to be finitely generated. Thus for a given level $\fN$, there are only finitely many primes $p$ for which there is an $i$ such that 
$H^i(Y_0(\fN),\Z)[p]$ is non-trivial. We obtain the following easy corollary which is crucial for our paper. 
\begin{proposition} \label{prop: lifting} There is a constant
$B$, depending only on $\fN$, such that for any prime $p>B$, every mod $p$ eigenform of level $\fN$ lifts to a complex one.
\end{proposition}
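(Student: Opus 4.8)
The plan is to deduce Proposition~\ref{prop: lifting} as a finiteness statement, combining the short exact sequence displayed just above it with the finite generation of the integral cohomology groups. First I would recall that the adelic locally symmetric space $Y_0(\fN)$, being a finite disjoint union $\bigsqcup_{j=1}^h \Gamma_j \backslash X$ of quotients of the contractible manifold $X = (\mathcal{H}_2^{\pm})^r \times \mathcal{H}_3^s$ by arithmetic groups $\Gamma_j$, has the homotopy type of a finite CW-complex (one can invoke the Borel--Serre compactification, or simply that arithmetic groups are of type $\mathrm{FP}_\infty$ and in fact admit a finite-dimensional model). Consequently each $H^i(Y_0(\fN),\Z)$ is a finitely generated abelian group, and moreover $H^i(Y_0(\fN),\Z) = 0$ for $i > 2r+3s = \dim X$. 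This is the input the excerpt already flags as ``well known''.

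Next I would make the torsion statement quantitative in the trivial way: for each $i$ in the finite range $0 \le i \le 2r+3s$, the finitely generated group $H^i(Y_0(\fN),\Z)$ has a finite torsion subgroup, whose order $t_i$ is therefore a well-defined positive integer. Set $B := \max_i t_i$ (or, more crudely, $B$ equal to the product of all the $t_i$, or to the largest prime dividing any $t_i$). Then for any prime $p > B$, no $t_i$ is divisible by $p$, so $H^{i}(Y_0(\fN),\Z)[p] = 0$ for every $i$; in particular $H^{i+1}(Y_0(\fN),\Z)[p] = 0$. By the displayed short exact sequence, vanishing of this $p$-torsion is equivalent to surjectivity of the reduction map $H^i(Y_0(\fN),\Z) \to H^i(Y_0(\fN),\F_p)$.

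Finally I would feed this into the lifting mechanism already described in Section~\ref{sec:lift}: an eigenform (complex or mod $p$) of degree $i$ and level $\fN$ is the same datum as a Hecke eigenclass in the relevant cohomology group, and by the Ash--Stevens lifting lemmas \cite[Section 1.2]{ash-stevens} a mod $p$ eigenclass in $H^i(Y_0(\fN),\F_p)$ lifts to a characteristic-zero Hecke eigenclass of the same degree and level whenever $H^{i+1}(Y_0(\fN),\Z)$ has no $p$-torsion — equivalently, whenever the reduction map in degree $i$ is surjective. Since we have arranged exactly this for all $i$ simultaneously once $p > B$, every mod $p$ eigenform of level $\fN$ lifts to a complex one, proving the proposition with the stated dependence of $B$ on $\fN$ alone.

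The only genuine point requiring care — and the step I would treat as the main obstacle — is the passage from ``the mod $p$ eigensystem occurs in $H^i(Y_0(\fN),\F_p)$'' to ``it occurs in $H^i(Y_0(\fN),\Z)$ and hence lifts to a complex eigenform'': one must check that the Hecke eigenclass obtained in integral cohomology is genuinely a \emph{simultaneous} eigenvector for \emph{all} the $T_\q$ with the prescribed eigenvalues (not merely after tensoring with $\C$), and that its image in complex cohomology is nonzero so that it defines an honest complex eigenform in the sense of this paper, i.e.\ a ring homomorphism $\mathbb{T}^{(i)}_\C(\fN) \to \C$. This is precisely what the Ash--Stevens lifting lemmas are designed to supply, so in practice the proof amounts to quoting \cite{ash-stevens} correctly; the abelian-group finiteness bookkeeping above is entirely routine. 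Note that this argument gives $B$ only ineffectively unless one has explicit bounds on the torsion $t_i$, which is the source of the ineffectivity in $B_K$ remarked upon in the introduction.
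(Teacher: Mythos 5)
Your argument is correct and is essentially identical to the paper's: the paper also deduces the proposition from the finite generation of the groups $H^i(Y_0(\fN),\Z)$ (so only finitely many primes can occur as torsion across the finitely many degrees), the displayed short exact sequence relating $p$-torsion to surjectivity of reduction, and the Ash--Stevens lifting lemmas. Your extra remarks (the bound $B$ as the maximum of the torsion orders, and that Ash--Stevens supply the simultaneous eigenvector claim) simply make explicit what the paper treats as an easy corollary.
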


\section{Mod $p$ Galois representations} \label{sec:Reps}
We will be using the following very special case of Serre's modularity
conjecture over number fields. This conjecture concerns the modularity of
$2$-dimensional mod $p$ Galois representations.  While it is easy to predict
the level and the Nebentypus of the sought after mod $p$ eigenform (Serre's
original recipe \cite{SerreDuke} is still applicable), 
predicting all the possible weights (which actually is
a completely local issue) is a very difficult task. A general weight recipe for
$\GL_2$ over number fields was given\footnote{Originally given for totally real fields but as the problem of 
weights is a local issue, their recipe applies to any number field.} 
by Buzzard, Diamond and Jarvis \cite{BDJ} (see also \cite[Section 6]{bergeron-venkatesh} and \cite{gee_etal}). 
However we shall not need the full strength of their conjecture; the mod $p$ Galois
representations that we shall encounter in this paper are of a very special
type, namely finite flat at every prime over $p$, and for such representations it is well-known
that (again going back to Serre's original work) we should expect the trivial
Serre weight (which we called \lq weight $2$\rq\ in this paper) 
among the possible weights. This is sufficient for our purposes.

Recall that for every real embedding $\sigma: K \hookrightarrow \R$ and every extension $\tau: \overline{K} \rightarrow \C$ of $\sigma$, 
we obtain a {\em complex conjugation} \  $\tau^{-1} \circ c \circ \tau \in G_K$, where $\langle c \rangle = {\rm Gal}(\C / \R)$. 
We say that $\overline{\rho} : G_K \rightarrow \GL_2(\overline{\F}_p)$ is {\em odd} if the determinant of every complex conjugation is $-1$. 
If $K$ is totally complex, we will regard $\overline{\rho}$ automatically as odd.

\begin{conjecture} \label{conj:Serre}
Let $\overline{\rho} : G_K \rightarrow \GL_2(\overline{\F}_p)$ be an odd, 
irreducible, continuous representation with Serre conductor $\fN$ (prime-to-$p$
part of its Artin conductor) and trivial character (prime-to-$p$ part of
${\det}(\overline{\rho})$). 
Assume that $p$ is unramified in $K$ and that $\overline{\rho}\vert_{G_{K_\p}}$ arises from a finite-flat group scheme over
$\Z_{K_\p}$ for every prime $\p | p$. Then there is a (weight $2$) mod $p$ eigenform $\theta$ over $K$ of level
$\fN$ such that for all primes $\q$ coprime to $p\fN$, we have 
\[
\Tr(\overline{\rho}
({\Frob}_\q)) = \theta(T_\q). 
\]
\end{conjecture}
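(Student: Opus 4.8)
This statement is a \emph{conjecture} --- a special case of the generalized Serre modularity conjecture for $\GL_2$ over the number field $K$ --- so rather than a proof I can only describe the shape of the argument one would hope to run and the precise points at which it currently breaks down.

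The plan is to attempt an automorphy lifting argument of Taylor--Wiles type, in the form adapted to arithmetic locally symmetric spaces by Calegari and Geraghty. First I would reduce to residual modularity: granting that $\overline{\rho}$ itself occurs in the mod $p$ cohomology of some $Y_0(\fN')$, one propagates this along a patched module. Since $p$ is unramified in $K$ and $\overline{\rho}\vert_{G_{K_\p}}$ is finite flat, the local deformation problem at each $\p \mid p$ is governed by the Fontaine--Laffaille / Kisin flat deformation ring, which is formally smooth of the expected dimension and singles out the trivial Serre weight (``weight $2$'' here); away from $p$ one imposes the minimal deformation condition matching the prescribed Serre conductor $\fN$. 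Patching over Taylor--Wiles auxiliary levels then produces a module over a power series ring, with the localized integral cohomology of the spaces $Y_0(\fN')$, at the maximal ideal of the Hecke algebra cut out by $\overline{\rho}$, as the object being patched. A crucial input is the existence of Galois representations attached to the relevant torsion Hecke eigenclasses together with local--global compatibility: for $K$ totally real or CM this is available through Scholze's construction and its refinements, but for a general $K$ it is itself only conjectural.

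The hard part is twofold, and is exactly why the statement remains open. First, when $K$ has signature $(r,s)$ with $r+s-1>0$ --- that is, when $K$ is neither $\Q$ nor imaginary quadratic --- the defect $l_0 = r+s-1$ is positive, so the cohomology carrying the eigenform is spread over several consecutive degrees rather than concentrated in one; the Calegari--Geraghty method then yields modularity only under a codimension (``numerical coincidence'') hypothesis on the patched module that is presently known only in restricted situations and that rests on unproven vanishing statements for completed (co)homology or on finer properties of the patched Galois representations. Second, one must still supply the inputs the Taylor--Wiles method does not itself produce: residual modularity of $\overline{\rho}$ in the first place, and the residually reducible or small/dihedral image cases, for which the cyclotomy-based level-raising and base-change tricks that let Khare--Wintenberger bootstrap over $\Q$ have no known analogue over a general number field. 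For all these reasons Conjecture~\ref{conj:Serre} is \emph{assumed} in this paper rather than proved; its role is precisely to isolate the weakest form of modularity --- finite flat at $p$, trivial character --- that the asymptotic Fermat argument actually consumes.
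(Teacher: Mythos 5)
You are right that this statement is Conjecture~\ref{conj:Serre}, one of the paper's two standing hypotheses, and the paper offers no proof of it --- it is simply stated and assumed. Your recognition of this, together with your accurate sketch of why a Calegari--Geraghty-style attack currently falls short for general $K$ (positive defect $l_0$, torsion classes, residual modularity), is exactly the correct response; there is nothing in the paper to compare a proof against.
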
 


\section{Motives attached to complex eigenforms} \label{sec: abelian varieties}
Recall that a simple abelian surface $A$ over $K$ whose algebra 
$\End_K(A)\otimes_\Z \Q$ of $K$-endomorphisms is an indefinite division quaternion algebra $D$ over $\Q$ is commonly called a {\em fake elliptic curve}.  The field of definition of a fake elliptic curve is necessarily totally complex. 

Let $A/K$ be a fake elliptic curve and let $p$ be a prime of good reduction
for $A$. Consider the representation $\sigma_{A,p} :  G_K \longrightarrow
\GL_4 (\Z_p)$ coming from the $p$-adic Tate module $T_p (A)$ of $A$.
Let $\O$  denote $\End_K(A)$ viewed as an order in $D$. Assume that $p$
splits $D$ and denote $\O \otimes \Z_p$ by $\O_p$. Then $\O$ acts on
$T_p (A)$ via endomorphisms and moreover $T_p(A) \simeq \O_p$ as a
left $\O$ module. Consequently $\Aut_{\O_p}(T_p(A)) \cong \O_p^\times$,
giving us a $2$-dimensional representation.
\begin{equation}\label{eqn:fec-rep}
\rho_{A,p} : \ G_K \longrightarrow \O_p^\times \simeq \GL_2(\Z_p).
\end{equation}
By a theorem of Ohta \cite{Ohta}, we have
$\sigma_{A,p} \simeq  \rho_{A,p} \oplus \rho_{A,p}$.
It is the $2$-dimensional representation $\rho_{A,p}$ 
that will be of interest to us.

The following is a very special case of a fundamental conjecture of the
Langlands Programme \cite{clozel, taylor} which asserts the existence of
motives associated to cohomological automorphic representations.

\begin{conjecture}\label{conj:ES}
  Let $\ff$ be a (weight $2$) complex eigenform over $K$ of level $\mathfrak{N}$ that is non-trivial and new. If $K$ has some real place, then there exists an elliptic curve $E_\ff/K$, of conductor 
  $\mathfrak{N}$, such that
  \begin{align}\label{eq: property defining E_f}
    \# E_\ff(\ZK/\fq) = 1 + {\bf N}\q - \ff(T_\q) 
\quad \text{for all $\q \nmid \fN$}.
  \end{align}
If $K$ is totally complex, then there exists either an elliptic curve $E_\ff$ of conductor $\fN$ satisfying \eqref{eq: property defining E_f} or a fake elliptic curve $A_\ff/K$, of conductor 
$\fN^2$, such that
  \begin{align}\label{eq: property defining A_f}
    \# A_\ff(\ZK/\q) = \left (1 + {\bf N}\q - \ff(T_\q) \right )^2 
\quad \text{for all $\q \nmid \fN$}.
  \end{align}
\end{conjecture}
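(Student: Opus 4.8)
This is a very special case of a central, still-open conjecture of the Langlands programme, so what follows is a strategy rather than a complete argument: we indicate the route one would take and where it currently breaks down. Throughout $\ff$ is non-trivial and new of weight $2$, with rational integer Hecke eigenvalues (as in the Introduction), so that the right-hand sides of \eqref{eq: property defining E_f} and \eqref{eq: property defining A_f} are genuine integers. The plan has two halves: attach to $\ff$ a compatible system of $2$-dimensional $\ell$-adic Galois representations of $G_K$, then realize that system geometrically as (essentially) the Tate module of an elliptic curve or, in the totally complex case, of a fake elliptic curve.

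\textbf{Step 1 (Galois representations).} One wants a compatible system $\{\rho_{\ff,\lambda}\}$ of continuous $\rho_{\ff,\lambda}\colon G_K \to \GL_2(\overline{\Q}_\ell)$, unramified outside $\fN\ell$, with $\Tr\rho_{\ff,\lambda}(\Frob_\q) = \ff(T_\q)$ and $\det\rho_{\ff,\lambda}(\Frob_\q) = \mathbf{N}\q$ for $\q\nmid\fN\ell$, de Rham with Hodge--Tate weights $\{0,1\}$ above $\ell$ and crystalline where $\ff$ is unramified. For $K$ totally real this is classical: $\ff$ then corresponds to a Hilbert modular form and $\rho_{\ff,\lambda}$ is cut out of the cohomology of a suitable Shimura curve (Carayol, Taylor, Blasius--Rogawski, et al.). For $K$ with a complex place --- already $K$ imaginary quadratic --- the space $Y_0(\fN)$ is not an algebraic variety, and one instead invokes the construction of Galois representations in the cohomology (including torsion) of arithmetic locally symmetric spaces due to Harris--Lan--Taylor--Thorne and Scholze, pinning down the conductor via local--global compatibility at the bad primes.

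\textbf{Step 2 (descent to an abelian variety).} Since the $\ff(T_\q)$ lie in $\Z$, each $\rho_{\ff,\lambda}$ is defined over $\Q_\ell$ and its Hodge--Tate data match those of the Tate module of an abelian variety of $\GL_2$-type. For $K$ totally real one builds this abelian variety explicitly: the $\ff$-part of the weight-$2$ cohomology inside the Jacobian of the relevant Shimura curve is an abelian variety over $K$ with quaternionic (or field) multiplication, and rationality of the eigenvalues together with the existence of a real place --- which excludes the fake-elliptic possibility --- isolates an elliptic curve $E_\ff/K$ of conductor $\fN$ satisfying \eqref{eq: property defining E_f}, by Eichler--Shimura. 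For $K$ totally complex there is no such Shimura variety, so one must instead realize $\{\rho_{\ff,\lambda}\}$ geometrically by a Fontaine--Mazur/Faltings-type argument: either it is the Tate module of an elliptic curve $E_\ff$, or $\rho_{\ff,\lambda}\oplus\rho_{\ff,\lambda}$ is the Tate module of an abelian surface $A_\ff/K$ whose $K$-endomorphism algebra is an indefinite division quaternion algebra (a fake elliptic curve), in which case Ohta's theorem \cite{Ohta} together with local--global compatibility yields \eqref{eq: property defining A_f} with conductor $\fN^2$; Faltings' isogeny theorem makes $E_\ff$ (resp.\ $A_\ff$) unique up to isogeny.

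\textbf{Main obstacle.} The decisive difficulty is Step 2 when $K$ has a complex place: passing from a compatible system of $\ell$-adic representations of $G_K$ (now largely available) to an actual abelian variety over $K$. No Shimura variety realizes these classes, so one is thrown onto the ``$\ell$-adic representation $\Rightarrow$ motive'' direction, the hard and still-open half of the Fontaine--Mazur and Langlands reciprocity conjectures; potential-automorphy methods give partial results but not the full statement. By contrast, once the abelian variety exists, checking the precise conductors via local--global compatibility at primes dividing $\fN$, and deciding between the elliptic and fake-elliptic cases according to whether $K$ has a real place, are comparatively routine.
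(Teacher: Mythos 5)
This statement is a \emph{conjecture} that the paper assumes as a hypothesis, not a result it proves: the authors introduce it as ``a very special case of a fundamental conjecture of the Langlands Programme'' with references to Clozel and Taylor, and the main theorem is conditional on it. There is therefore no proof in the paper to compare yours against, and you are right to present your text as a strategy rather than an argument. Your sketch is a fair account of the expected route and of where it breaks down: the construction of the associated $\ell$-adic (and mod $p$) Galois representations in the cohomology of the locally symmetric spaces $Y_0(\fN)$ is now largely available (Harris--Lan--Taylor--Thorne, Scholze), while the passage from a compatible system to an actual elliptic curve or fake elliptic curve over a field with a complex place is precisely the open ``automorphic to motivic'' direction.

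Two small inaccuracies in your sketch are worth flagging. First, the conjecture's first case assumes only that $K$ has \emph{some} real place, not that $K$ is totally real; your Shimura-curve argument in Step 2 only addresses the totally real case, and for a field with mixed signature one is in essentially the same non-algebraic situation as the totally complex case. Second, even for $K$ totally real the statement is not fully a theorem: when $[K:\Q]$ is even and the relevant automorphic representation is an unramified principal series at every finite place, the Jacquet--Langlands transfer to a Shimura curve is obstructed, and the existence of the elliptic curve $E_\ff$ is still open in that residual case. Neither point affects the paper, which treats the whole statement as conjectural.
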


Finally, we record a standard fact about fake elliptic curves that we shall crucially use later, see \cite[Section 3]{jordan}.
\begin{theorem}\label{thm:goodredn}
 Let $A/K$ be a fake elliptic curve. Then $A$ has potential good
reduction everywhere. More precisely, let $\q$ be a prime of $K$ and consider
$A/ K_\q$. There is totally ramified extension $K'/ K_\q$ of degree dividing
$24$ such that $A/K'$ has good reduction.
\end{theorem}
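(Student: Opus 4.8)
The plan is to derive both assertions from the structure of the indefinite quaternion division algebra $D=\End_K(A)\otimes_\Z\Q$, first proving potential good reduction everywhere and then the bound $24$ on the ramification. For potential good reduction, apply Grothendieck's semistable reduction theorem: after a finite extension $L/K_\q$ the surface $A$ has semistable reduction, with $0\to\mathbb T\to\mathcal A^0_s\to\mathcal B\to 0$ the identity component of the special fibre of the N\'eron model over $\O_L$, where $\mathbb T$ is a torus of some dimension $t$, $\mathcal B$ an abelian variety, and $t+\dim\mathcal B=2$. Since the endomorphism algebra of an abelian variety is semisimple, reduction is injective on endomorphism algebras, so $D\hookrightarrow\End^0(\mathcal A^0_s)$ (an endomorphism vanishing on $\mathcal A^0_s$ spans a two-sided ideal which, by the monodromy filtration, squares to zero, hence is zero). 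Endomorphisms of $\mathcal A^0_s$ preserve its maximal torus $\mathbb T$, so after killing the nilpotent ideal $\mathrm{Hom}(\mathcal B,\mathbb T)\otimes\Q$ the simple algebra $D$ embeds into one of the factors of $\End^0(\mathbb T)\times\End^0(\mathcal B)\cong M_t(\Q)\times\End^0(\mathcal B)$. This is impossible for $t=1,2$, since $M_t(\Q)$ with $t\le2$ is split and $\End^0(\mathcal B)$ with $\dim\mathcal B\le1$ is either commutative or a definite quaternion algebra over $\Q$, whereas $D$ is neither. Hence $t=0$ and $A$ has good reduction over $L$, so $A$ has potential good reduction at every $\q$.

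For the ramification bound, replace $A$ by an isogenous surface — which leaves unchanged the minimal extension over which good reduction is attained — so that $\O=\End_K(A)$ is a maximal order, and fix a rational prime $\ell$ different from the residue characteristic $p$ of $\q$ that splits $D$. Then $\O\otimes\Z_\ell\cong M_2(\Z_\ell)$, the Tate module $T_\ell A$ is free of rank one over $M_2(\Z_\ell)$, and by Ohta's theorem \cite{Ohta} we get $T_\ell A\cong W^{\oplus2}$ with $W\cong\Z_\ell^2$ carrying a continuous representation $\rho_{A,\ell}\colon G_{K_\q}\to\GL(W)\cong\GL_2(\Z_\ell)$. By potential good reduction and N\'eron--Ogg--Shafarevich, $\Phi:=\rho_{A,\ell}(I_\q)$ is finite. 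The kernel of $\rho_{A,\ell}|_{I_\q}$ is normal in $G_{K_\q}$, and a lift of Frobenius splits $1\to I_\q\to G_{K_\q}\to\widehat\Z\to1$; the fixed field of the subgroup generated by this kernel and such a lift is then a totally ramified extension $K'/K_\q$ with $[K':K_\q]=|\Phi|$ on which inertia acts trivially on $T_\ell A$, so $A$ has good reduction over $K'$ by N\'eron--Ogg--Shafarevich. It remains to show that $|\Phi|$ divides $24$.

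This last point is the crux. Over $\widetilde K=K_\q^{\mathrm{ur}}$ the surface acquires good reduction over the (unique) minimal extension $N_0/\widetilde K$, which is totally ramified and Galois with $\Gal(N_0/\widetilde K)$ canonically identified with $\Phi$; moreover $\Phi$ acts on the reduced abelian surface $\overline A/\overline\F_p$ \emph{faithfully} — by minimality of $N_0$, or because this action on $T_\ell$ of the special fibre is identified with $\rho_{A,\ell}|_{I_\q}$ — and \emph{commuting with $\O$}, since $\O$ consists of endomorphisms rational over $\widetilde K$. Thus $\Phi$ injects into the group of finite-order units of the centralizer $C$ of $D$ in $\End^0(\overline A)$. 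As $D$ is simple and unital, $\overline A$ is isotypic, and (again using that $D$ is an indefinite division algebra) the possibilities are: $\overline A$ simple, or $\overline A\sim E^2$ with $E$ ordinary, or $\overline A\sim E^2$ with $E$ supersingular. In the first two cases the double centralizer theorem identifies $C$ with a field of degree at most $2$ over $\Q$, whose roots of unity number at most $6$. In the supersingular case $\End^0(\overline A)\cong M_2(B_{p,\infty})$ and $C$ is a quaternion algebra over $\Q$ that is definite (its Brauer class is ramified at $\infty$ because $D$ is not); a finite-order unit of an order in a definite rational quaternion algebra generates a field of degree at most $2$, hence has order in $\{1,2,3,4,6\}$, which forces any finite subgroup to be cyclic, a quaternion group $Q_8$ or $Q_{12}$, or the binary tetrahedral group $\SL_2(\F_3)$ — all of order dividing $24$. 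Hence $|\Phi|$ divides $24$, and $A$ has good reduction over the totally ramified extension $K'/K_\q$ of degree $|\Phi|$, as required.

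The semistable-reduction step is routine; the genuine obstacle is the final paragraph, where one must translate the arithmetic quantity — the degree of the minimal good-reduction extension — into the order of a finite group of geometric automorphisms of a mod-$\q$ abelian surface commuting with the quaternionic multiplication, and then bound it via the classification of endomorphism algebras of abelian surfaces over $\overline\F_p$, crucially exploiting that $D$ is indefinite while its ``reduction partner'' is forced to be definite or commutative. The two points inside this argument that need care are the faithfulness of the $\Gal(N_0/\widetilde K)$-action on $\overline A$ (equivalently, the comparison of the $\ell$-adic Tate modules of the generic and special fibres as Galois/descent modules) and the use of the splitting of the local Galois group to descend the totally ramified extension from $K_\q^{\mathrm{ur}}$ back to $K_\q$ without enlarging its degree.
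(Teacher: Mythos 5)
Your proof is correct, but note that the paper itself gives no argument for Theorem~\ref{thm:goodredn}: it is quoted as a standard fact with a citation to Jordan \cite[Section 3]{jordan}, and what you have written is essentially a reconstruction of that standard proof (Grothendieck semistable reduction plus the fact that a nontrivial toric part would give a too-small faithful module for the division algebra $D$; then the Serre--Tate device of letting $\Gal(N_0/K_\q^{\mathrm{ur}})\cong\Phi$ act faithfully on the special fibre commuting with $\O$, and bounding $\Phi$ inside the torsion units of the centralizer of $D$ in $\End^0(\overline A)$, which is an imaginary quadratic field in the ordinary/CM case and a definite quaternion algebra over $\Q$ in the supersingular case). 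The only places where your write-up is terser than it should be are standard facts: injectivity of reduction of endomorphisms for semiabelian N\'eron models (your ``ideal squares to zero'' aside is not the usual justification; rigidity of homomorphisms of semiabelian schemes over a henselian base is), and the faithfulness of the $\Phi$-action on $\overline A$ via the identification of $T_\ell$ of the special fibre with $T_\ell A$ as inertia modules; your descent from $K_\q^{\mathrm{ur}}$ to a totally ramified $K'/K_\q$ of degree exactly $\lvert\Phi\rvert$ using a splitting of $1\to I_\q\to G_{K_\q}\to\widehat{\Z}\to 1$ is sound. So there is no genuine gap; your route and the cited one coincide in substance.
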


\section{The Frey curve and the associated mod $p$ Galois representation}
\label{sec:Frey}

For an elliptic curve $E$ over a number field $K$ and a rational prime $p$
we write
\[
\overline{\rho}_{E,p} \; : \; G_K \rightarrow \Aut(E[p]) \cong \GL_2(\F_p)
\]
for the representation induced by the action of $G_K$ on the
$p$-torsion $E[p]$.
We make repeated use of the following lemma. 
\begin{lemma}\label{lem:inertiaGeneral}
Let $E$ be an elliptic curve over $K$ with $j$-invariant $j$.
Let $p \ge 5$ be a rational prime and write $\overline{\rho}=\overline{\rho}_{E,p}$.
Let $\fq \nmid p$ be a prime of $K$.
\begin{enumerate}
\item[(i)] If $\ord_\fq(j) \ge 0$ (i.e. $E$ has potentially good
reduction at $\fq$) then $\# \overline{\rho}(I_\fq) \mid 24$.
\item[(ii)] Suppose $\ord_\fq(j) <0$ (i.e. $E$ has potentially multiplicative
reduction at $\fq$).
\begin{itemize} 
\item If $p \nmid \ord_\fq(j)$ then
$\# \overline{\rho}(I_\fq)=p$ or $2p$.
\item If $p \mid \ord_\fq(j)$ then
$\# \overline{\rho}(I_\fq)=1$ or $2$.
\end{itemize}
\end{enumerate}
\end{lemma}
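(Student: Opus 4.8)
The plan is to handle the two cases of the statement by the two classical local recipes: the theory of potentially good reduction (via the N\'eron--Ogg--Shafarevich criterion and the N\'eron model) for part~(i), and the Tate curve parametrization for part~(ii). Throughout I write $\ell$ for the residue characteristic of $\fq$; the hypothesis $\fq \nmid p$ gives $\ell \ne p$, and I would use the assumption $p \ge 5$ twice: to ensure that $\#\Aut$ of an elliptic curve over $\overline{\F}_\ell$ (which always divides $24$) is prime to $p$, and to ensure that the kernel of reduction $\GL_2(\Z_p) \to \GL_2(\F_p)$ is torsion-free, so that on any finite subgroup --- in particular on $\rho_{E,p}(I_\fq)$ (the action of $G_{K_\fq}$ on $T_p E$) whenever that image is finite --- the $p$-adic and mod $p$ representations have the same image.

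For part~(i), I would use that $\ord_\fq(j) \ge 0$ is equivalent to $E$ having potentially good reduction at $\fq$, so $\rho_{E,p}(I_\fq)$ is finite. Let $L \subseteq \overline{K_\fq}$ be the fixed field of $\ker(\overline{\rho}_{E,p}|_{I_\fq})$ and let $\widetilde{E}$ be the special fibre of the N\'eron model of $E$ over $\OO_L$, an elliptic curve over $\overline{\F}_\ell$. By N\'eron--Ogg--Shafarevich (valid since $\ell \ne p$), together with the torsion-freeness noted above used to pass between $\rho_{E,p}$ and $\overline{\rho}_{E,p}$, the field $L$ is precisely the minimal extension of $K_\fq^{\mathrm{nr}}$ over which $E$ acquires good reduction, so $\Gal(L/K_\fq^{\mathrm{nr}}) \cong \overline{\rho}_{E,p}(I_\fq)$ and this group acts on $\widetilde{E}$. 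Because $\ell \ne p$ the $p$-torsion specializes isomorphically, so this action is faithful, and hence $\#\overline{\rho}_{E,p}(I_\fq)$ divides $\#\Aut_{\overline{\F}_\ell}(\widetilde{E})$, which divides $24$. (Equivalently this is the semistable reduction theorem in the sharp form that $E$ acquires good reduction over an extension of $K_\fq$ of ramification index dividing $24$; it is standard and one may simply cite it.)

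For part~(ii), I would use that $\ord_\fq(j) < 0$ forces $j \notin \OO_\fq$, hence $j \ne 0, 1728$, so over $K_\fq$ the curve $E$ is a quadratic twist $E_q \otimes \chi$ of the Tate curve $E_q$ whose Tate parameter $q \in K_\fq^\times$ satisfies $j(E_q) = j$, and therefore $\ord_\fq(q) = -\ord_\fq(j)$. In a suitable basis $\overline{\rho}_{E_q,p}$ is upper triangular with diagonal entries $\overline{\chi}_p$ and $1$, where $\overline{\chi}_p$ is the mod $p$ cyclotomic character, and the off-diagonal cocycle is trivial on $I_\fq$ exactly when $K_\fq(q^{1/p})/K_\fq$ is unramified. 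Since $\fq \nmid p$, the character $\overline{\chi}_p$ is unramified at $\fq$, so $\overline{\rho}_{E_q,p}(I_\fq)$ is unipotent, of order $p$ when $K_\fq(q^{1/p})/K_\fq$ is (tamely, totally) ramified --- equivalently when $p \nmid \ord_\fq(q)$ --- and trivial otherwise. Twisting by the quadratic character $\chi$ enlarges the image of inertia only by a subgroup of $\{\pm I\}$, which cannot lie in a unipotent subgroup since $p \ge 3$; and passing between $E$ and $E_q$ over the extension cut out by $\chi$ multiplies $\ord_\fq(q)$ by the ramification index $1$ or $2$, hence does not change its divisibility by $p$ (as $p \ne 2$). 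Putting these together gives $\#\overline{\rho}_{E,p}(I_\fq) \in \{p, 2p\}$ when $p \nmid \ord_\fq(j)$ and $\#\overline{\rho}_{E,p}(I_\fq) \in \{1, 2\}$ when $p \mid \ord_\fq(j)$.

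I expect the only genuinely delicate point to be the bound $24$ in part~(i) --- that is, making precise the faithfulness of the action of $\Gal(L/K_\fq^{\mathrm{nr}})$ on the special fibre, or equivalently isolating the correct sharp form of the semistable reduction theorem. Everything else (the N\'eron--Ogg--Shafarevich criterion, injectivity of specialization on prime-to-$\ell$ torsion, torsion-freeness of the $p$-congruence kernel of $\GL_2(\Z_p)$ for $p \ge 3$, the automorphism groups of elliptic curves over an algebraically closed field, and the explicit Galois module structure of Tate curves) is entirely standard and I would cite it, e.g.\ from Silverman's two volumes.
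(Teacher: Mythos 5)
Your proposal is correct and follows essentially the same route as the paper: part (ii) via the Tate parametrization (unipotent inertia image, the criterion $p \nmid \ord_\fq(q)$ for ramification of $K_\fq(q^{1/p})$) followed by a quadratic-twist argument, exactly as in the paper's proof. For part (i) the paper simply cites Kraus's result that the inertia image has order dividing $24$ for potentially good reduction; your sketch via N\'eron--Ogg--Shafarevich and the automorphism group of the special fibre is just the standard proof of that cited fact, so there is no substantive difference.
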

\begin{proof}
For (i) see 
\cite[Introduction]{kraus2}. 
For (ii) we
suppose first that $E$
has split multiplicative reduction at $\fq$. 
As $\fq \nmid p$ and $E$ is semistable at $\fq$, inertia at $\fq$
acts unipotently on $E[p]$, and thus $\# \overline{\rho}(I_\fq) \mid p$.
From the theory of the Tate
curve \cite[Proposition V.6.1]{SilvermanII}, we know
that $p \mid \# \overline{\rho}(I_\fq)$ if and only if 
$p \nmid \ord_\fq(j)$. This proves (ii) if $E$ has split
multiplicative reduction. Suppose now that $E$ has potentially
multiplicative reduction. Then $E$ is a quadratic twist of 
an elliptic curve $E^\prime$ with split multiplicative reduction.
Thus $\overline{\rho}=\phi \otimes \overline{\rho^\prime}$,
where $\overline{\rho^\prime}=\overline{\rho}_{E^\prime,p}$ and $\phi$
is a quadratic character. Part (ii) follows.
\end{proof}

\medskip

Let $S$ and $T$ be as in the Introduction.
We suppose once and for all that
$T \ne \emptyset$.
Let $p$
be an odd prime, and let
$(a,b,c) \in K^3$ be a non-trivial solution to the Fermat equation
\eqref{eqn:Fermat}.
We scale the solution $(a,b,c)$ so that it is
integral. As the class number might not
be $1$, we cannot suppose that $a$, $b$, $c$ are coprime.
However, following \cite{FS}, we may suppose that the
ideal generated by $a$, $b$, $c$ belongs to a finite set
as we now explain.
For a non-zero ideal $\ga$ of $\ZK$, we denote by
$[\ga]$ the class of $\ga$ in the class group $\Cl(K)$.
Let
\begin{equation}\label{eqn:cG}
\cG_{a,b,c}:=a \ZK+b\ZK+c\ZK \, .
\end{equation}
and let $[a,b,c]$ denote
the class of $\cG_{a,b,c}$ in $\Cl(K)$.
We exploit the well-known fact
(e.g. \cite[Theorem VIII.4]{CF})
that every ideal class contains
infinitely many prime ideals.
Let $\mathfrak{c}_1,\dotsc,\mathfrak{c}_h$
be the ideal classes of $K$.
For each class $\mathfrak{c}_i$, we choose
(and fix) a prime ideal $\fp_i \nmid 2$ of smallest possible
norm representing $\mathfrak{c}_i$.
The set $\cH$ denotes our fixed choice of odd prime ideals
representing the class group:
$\cH=\{\fp_1,\dots,\fp_h\}$.
By \cite[Lemma 3.2]{FS}, we may scale $(a,b,c)$ so that
it remains integral, but $\cG_{a,b,c} \in \cH$.
We shall henceforth suppose $\cG_{a,b,c}=\fp \in \cH$.
Associated to $(a,b,c)$ is the Frey curve
\begin{equation}\label{eqn:Frey}
E=E_{a,b,c} \; : \; Y^2=X(X-a^p)(X+b^p).
\end{equation}
We write $\overline{\rho}=\overline{\rho}_{E,p}$.

The following  is Lemma 3.7 of \cite{FS}, but as it is crucial
to everything that follows we include a proof here.
\begin{lemma}\label{lem:mPImage}
Let $\mP \in T$ and suppose $p > 4\ord_\mP(2)$. Then
\begin{enumerate}
\item[(i)]
$E$ has potentially multiplicative reduction at $\mP$;
\item[(ii)] $p \mid \# \overline{\rho}(I_\mP)$ where $I_\mP$
denotes the inertia subgroup of $G_K$ at $\mP$.
\end{enumerate}
\end{lemma}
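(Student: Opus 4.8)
The plan is to compute the $\mP$-adic valuation of the $j$-invariant of the Frey curve $E = E_{a,b,c}$ and feed the result into Lemma \ref{lem:inertiaGeneral}(ii). First I would recall the standard formulas for the Frey curve \eqref{eqn:Frey}: its minimal discriminant (of the given model) is $\Delta = 16\,(abc)^{2p}$ and its $j$-invariant is $j = 2^8 (A^2 - AB + B^2)^3/(ABC)^2$ where $A = a^p$, $B = b^p$, $C = c^p$, so that
\[
\ord_\mP(j) = 8\,\ord_\mP(2) + 3\,\ord_\mP(A^2-AB+B^2) - 2p\,\ord_\mP(abc).
\]
The key arithmetic input is that, since $a^p + b^p + c^p = 0$ and $\mP$ lies above $2$ with residue field $\F_2$, at least two of $a,b,c$ are $\mP$-adic units after the scaling $\cG_{a,b,c} = \fp \in \cH$ with $\fp \nmid 2$ (so $\ord_\mP(\cG_{a,b,c}) = 0$ and $a,b,c$ cannot all be non-units at $\mP$); moreover one can show that exactly one of $A,B,C$ is divisible by $\mP$ and in fact by a high power of it. The relevant point is that modulo $\mP$ the quantities $A,B,C$ reduce to $a,b,c$ raised to an odd power, hence to themselves in $\F_2$, and the relation forces one of them to vanish mod $\mP$. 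A short $2$-adic computation (essentially the observation that in $\F_2$ one has $A^2 - AB + B^2 = A + B$ when $C \equiv 0$, combined with lifting-the-exponent type estimates at the prime $\mP \mid 2$) then shows that $\ord_\mP(A^2 - AB + B^2)$ is small — bounded by something like $2\,\ord_\mP(2)$ — while $\ord_\mP(abc) = \ord_\mP(c)$ say is divisible by $p$ and contributes $-2p\,\ord_\mP(c)$ with $\ord_\mP(c) \ge 1$.

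Putting these together, when $p > 4\,\ord_\mP(2)$ the term $-2p\,\ord_\mP(abc)$ dominates, so $\ord_\mP(j) < 0$, giving (i): $E$ has potentially multiplicative reduction at $\mP$. For (ii), I then need $p \nmid \ord_\mP(j)$. From the displayed formula, $\ord_\mP(j) \equiv 8\,\ord_\mP(2) + 3\,\ord_\mP(A^2-AB+B^2) \pmod p$ once $p \mid \ord_\mP(abc)\cdot p$ (which it does), so it suffices that $p$ does not divide $8\,\ord_\mP(2) + 3\,\ord_\mP(A^2-AB+B^2)$; since both $\ord_\mP(2)$ and $\ord_\mP(A^2-AB+B^2)$ are bounded explicitly in terms of $\ord_\mP(2)$ and the hypothesis $p > 4\,\ord_\mP(2)$ makes this quantity lie strictly between $0$ and $p$ (it is positive because $\ord_\mP(A^2-AB+B^2) \ge 0$ and $\ord_\mP(2) \ge 1$, and less than $p$ by the bound), it is coprime to $p$. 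Then Lemma \ref{lem:inertiaGeneral}(ii), applied with $\fq = \mP \nmid p$ (note $p$ is odd and $p > 4\,\ord_\mP(2) \ge 4 > $ handled, and $p \ge 5$ since $p$ large), yields $\#\overline{\rho}(I_\mP) = p$ or $2p$, in particular $p \mid \#\overline{\rho}(I_\mP)$.

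The main obstacle I anticipate is the bookkeeping in the $\mP$-adic valuation estimate for $A^2 - AB + B^2$: one must be careful that $\mP$ divides $2$ rather than an odd prime, so the naive "$A^2 - AB + B^2$ and $A+B$ differ by $3AB$, which is a unit" argument needs the fact that $3$ is a unit at $\mP$ (true, since $\mP \mid 2$) but one still has to control the valuation of $A^2 - AB + B^2$ itself rather than just compare it to that of $A + B$, and $A + B = -C$ has valuation divisible by $p$. The cleanest route is: exactly one of $A, B, C$, say $C$, is divisible by $\mP$; then $A + B = -C \equiv 0$, and $A^2 - AB + B^2 = (A+B)^2 - 3AB = C^2 - 3AB$, so $\ord_\mP(A^2-AB+B^2) = \ord_\mP(3AB) = 0$ since $A, B$ are $\mP$-units and $3$ is an $\mP$-unit — unless $\ord_\mP(C^2) = 0$ too, which cannot happen. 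This makes the estimate clean: $\ord_\mP(A^2 - AB + B^2) = 0$, whence $\ord_\mP(j) = 8\,\ord_\mP(2) - 2p\,\ord_\mP(C) < 0$ and $\ord_\mP(j) \equiv 8\,\ord_\mP(2) \pmod p$, which is coprime to $p$ precisely when $p > 4\,\ord_\mP(2)$ (so that $0 < 8\,\ord_\mP(2) < 2p$ and $8\,\ord_\mP(2) \ne p$ as $p$ is odd). Thus the seeming obstacle dissolves once one uses the identity $A^2 - AB + B^2 = (A+B)^2 - 3AB$ together with the residue-field-$\F_2$ hypothesis that pins down which of $A,B,C$ is the non-unit.
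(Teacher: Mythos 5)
Your proposal is correct and follows essentially the same route as the paper: use $\cG_{a,b,c}=\fp\nmid 2$ plus the residue field $\F_2$ to show exactly one of $a,b,c$ is divisible by $\mP$, deduce $\ord_\mP(j)=8\ord_\mP(2)-2p\cdot(\text{positive integer})$, note this is negative and prime to $p$ when $p>4\ord_\mP(2)$, and invoke Lemma~\ref{lem:inertiaGeneral}(ii). The only (harmless) slip is the sign in the numerator of $j$: it is $\bigl(a^{2p}+a^pb^p+b^{2p}\bigr)^3=\bigl(b^{2p}-a^pc^p\bigr)^3$ rather than $(A^2-AB+B^2)^3$, and with the correct sign one has $A^2+AB+B^2=C^2-AB$, so the unit claim at $\mP$ is immediate (no appeal to $3$ being a $\mP$-unit is needed) and the rest of your computation is unchanged.
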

\begin{proof}
Since $\cG_{a,b,c}=\fp \nmid 2$, we know $\mP$ divides at most one of $a$, $b$, $c$. 
By definition of $T$, the residue field of $\mP$
is $\F_2$. If $\mP \nmid abc$ then $0=a^p+b^p+c^p \equiv 1+1+1 \pmod{\mP}$,
giving a contradiction. 
We see that $\mP$ divides precisely one of $a$, $b$, $c$. We
permute $a$, $b$, $c$ so that $\mP \mid b$; such a permutation
corresponds to twisting $E$ by $\pm 1$, and so does not affect $j$.
Now the expression for $j$ in terms of $a$, $b$, $c$ is
\begin{equation}\label{eqn:jinvariant}
j=2^8 \cdot \frac{(b^{2p}-a^p c^p)^3}{a^{2p} b^{2p} c^{2p}}.
\end{equation}
It follows that $\ord_\mP(j)=8 \ord_\mP(2)-2p \ord_{\mP}(b)$.
As $p> 4 \ord_\mP(2)$ we have that $\ord_\mP(j)<0$ and so
$E$ has potentially multiplicative reduction at $\mP$. Moreover,
$p \nmid \ord_\mP(j)$. 
The lemma follows from Lemma~\ref{lem:inertiaGeneral}.
\end{proof}

\begin{lemma}\label{lem:mimage}
Suppose $p \ge 5$ and $\fp \nmid p$. Then
$\# \overline{\rho}(I_\fp) \mid 24$.
\end{lemma}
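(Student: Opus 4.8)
The plan is to show that the Frey curve $E=E_{a,b,c}$ has good reduction, or at worst potentially good reduction, at the prime $\fp = \cG_{a,b,c}$, and then to invoke part (i) of Lemma~\ref{lem:inertiaGeneral}. Recall $\fp \nmid 2$ by construction (we chose $\cH$ to consist of odd primes), so the reduction type of $E$ at $\fp$ is governed by its minimal discriminant and conductor, not by the $2$-torsion structure, and we may argue with the usual Tate-curve/$j$-invariant criterion. First I would compute $\ord_\fp$ of the standard invariants of the Weierstrass model in \eqref{eqn:Frey}: one has $\Delta = 16\, a^{2p} b^{2p} c^{2p}$ and $c_4 = 16(a^{2p} + a^p b^p + b^{2p})$ (up to the usual constants), so that $j = c_4^3/\Delta$ has $\ord_\fp(j) = 3\ord_\fp(c_4) - \ord_\fp(\Delta)$.

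Next I would use the key arithmetic input: since $\cG_{a,b,c} = \fp$, the prime $\fp$ divides the ideal generated by $a,b,c$, but no prime divides more than one of $a^p$, $b^p$, $c^p$ to a strictly higher order in the sense needed — more precisely, because $a^p + b^p + c^p = 0$, if $\fp$ divides two of $a,b,c$ it divides the third, and then $\ord_\fp(a), \ord_\fp(b), \ord_\fp(c)$ are all positive and we may divide through, contradicting $\cG_{a,b,c}=\fp$ being the exact common ideal unless $\ord_\fp$ of all three is equal; in the latter degenerate situation one checks directly that $E$ acquires good reduction after the obvious scaling, since dividing $a,b,c$ by a uniformizer-power changes $E$ by an admissible twist. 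In the main case $\fp$ divides exactly one or none of $a,b,c$: then one shows $\ord_\fp(c_4) = \ord_\fp(16) = 0$ (as $\fp\nmid 2$, and $a^{2p}+a^pb^p+b^{2p}$ is a $\fp$-unit because $\fp$ cannot divide two of the three variables), so $\ord_\fp(j) = -\ord_\fp(\Delta) = -2p(\ord_\fp(a)+\ord_\fp(b)+\ord_\fp(c)) \le 0$, with equality iff $\fp\nmid abc$. If $\ord_\fp(j) = 0$ then $E$ has potentially good reduction at $\fp$ and Lemma~\ref{lem:inertiaGeneral}(i) gives $\#\overline{\rho}(I_\fp)\mid 24$ directly. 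If $\ord_\fp(j) < 0$, then $p\mid \ord_\fp(j)$ since $\ord_\fp(j) = -2p(\cdots)$ is divisible by $p$, so by Lemma~\ref{lem:inertiaGeneral}(ii) we get $\#\overline{\rho}(I_\fp) = 1$ or $2$, which certainly divides $24$.

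I expect the main obstacle to be the bookkeeping around the degenerate case where $\fp$ divides all of $a$, $b$, $c$: one needs to be careful that after scaling so that $\cG_{a,b,c}\in\cH$ this genuinely cannot happen with $\fp$ dividing the gcd unless the valuations of $a^p,b^p,c^p$ coincide, and then to confirm that in that sub-case the Frey curve's bad reduction at $\fp$ is tame and multiplicative with $p\mid \ord_\fp(j)$ (or is in fact good), again landing in a branch of Lemma~\ref{lem:inertiaGeneral} that yields a divisor of $24$. In practice this bookkeeping is identical to the reasoning already used in the proof of Lemma~\ref{lem:mPImage} (the valuation formula $\ord_\fp(j) = 8\ord_\fp(2) - 2p(\ord_\fp a + \ord_\fp b + \ord_\fp c)$ specialized to $\ord_\fp(2)=0$), so the cleanest write-up is to parallel that argument: reduce to $\fp$ dividing at most one of $a,b,c$, read off $\ord_\fp(j)$, and split on whether it is zero or a negative multiple of $p$, invoking Lemma~\ref{lem:inertiaGeneral}(i) in the first case and the second bullet of Lemma~\ref{lem:inertiaGeneral}(ii) in the second.
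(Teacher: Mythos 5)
Your overall strategy---compute $\ord_\fp(j)$ and feed the result into Lemma~\ref{lem:inertiaGeneral}---is the right one, but you have misread the situation at $\fp$, and as a result the case you treat as the main case is vacuous while the case you set aside as ``degenerate'' is the entire content of the lemma. By \eqref{eqn:cG}, $\cG_{a,b,c}=a\ZK+b\ZK+c\ZK=\fp$ means $a$, $b$, $c$ all lie in $\fp$, so $\ord_\fp(a),\ord_\fp(b),\ord_\fp(c)\ge 1$ always; the situation ``$\fp$ divides exactly one or none of $a,b,c$'' never occurs here (it is the situation of Lemma~\ref{lem:mPImage}, where the prime in question does not divide $\cG_{a,b,c}$). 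Your claimed contradiction in the remaining case is also not valid: all three valuations being positive does not contradict $\cG_{a,b,c}=\fp$---it is forced by it, the condition saying only that the minimum of the three valuations at $\fp$ is $1$---and the valuations need not all be equal. Nor may you ``divide through'': $\fp$ need not be principal, which is precisely why the solution is scaled so that $\cG_{a,b,c}\in\cH$ rather than so that $a,b,c$ are coprime.

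Your fallback formula is likewise wrong in the relevant case: $\ord_\fp(j)=8\ord_\fp(2)-2p\bigl(\ord_\fp(a)+\ord_\fp(b)+\ord_\fp(c)\bigr)$ presupposes that the numerator $(b^{2p}-a^pc^p)^3$ in \eqref{eqn:jinvariant} is a $\fp$-unit, which fails exactly when $\fp$ divides all of $a,b,c$; for instance if all three valuations equal $k$ your formula predicts $\ord_\fp(j)=-6pk<0$, whereas in fact $\ord_\fp(j)\ge 0$, and one must use part (i) of Lemma~\ref{lem:inertiaGeneral}, not part (ii). The correct bookkeeping is: since $a^p+b^p+c^p=0$, the minimum of the three valuations at $\fp$ is attained at least twice; permute $a,b,c$ (a twist by $\pm1$, harmless for inertia at the odd prime $\fp$) so that $\ord_\fp(a)=\ord_\fp(c)=k$ and $\ord_\fp(b)=k+t$ with $t\ge 0$, taking care that the two minimal valuations sit on the pair $a,c$ occurring in the product $a^pc^p$. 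If $t=0$ then the numerator of \eqref{eqn:jinvariant} has valuation at least $6pk$ and the denominator exactly $6pk$, so $\ord_\fp(j)\ge 0$ and Lemma~\ref{lem:inertiaGeneral}(i) applies; if $t\ge 1$ then $\ord_\fp(b^{2p}-a^pc^p)=2pk$ exactly, so $\ord_\fp(j)=-2pt$, a negative multiple of $p$, and the second bullet of Lemma~\ref{lem:inertiaGeneral}(ii) gives image of order $1$ or $2$. Although both branches happen to yield divisors of $24$, your write-up reaches them through a vacuous case division, a false appeal to $\cG_{a,b,c}=\fp$, and an incorrect valuation of $j$, so as it stands it is not a proof.
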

\begin{proof}
As $\cG_{a,b,c}=\fp$ we know that $\ord_\fp(a)$, $\ord_\fp(b)$, $\ord_\fp(c)$
are all positive. Moreover, as $a^p+b^p+c^p=0$, we have 
that at least two of $\ord_\fp(a)$, $\ord_\fp(b)$, $\ord_\fp(c)$ are equal.
Permuting $a$, $b$, $c$ (which twists $E$ by $\pm 1$ and so does
not affect the image of inertia at $\fp \nmid 2$) we may suppose
\[
\ord_\fp(a)=\ord_\fp(c)=k, \qquad \ord_\fp(b)=k+t
\]
where $k \ge 1$ and $t \ge 0$. If $t=0$ then
from \eqref{eqn:jinvariant} we have $\ord_\fp(j) \ge 0$
and so the lemma follows from Lemma~\ref{lem:inertiaGeneral}.
Thus suppose that $t \ge 1$. Then $\ord_\fp(j)=-2pt$ and so
by Lemma~\ref{lem:inertiaGeneral} we have $\overline{\rho}(I_\fp)=1$
or $2$, completing the proof.
\end{proof}

\begin{lemma}\label{lem:Serre}
The Frey curve $E$ is semistable away from $S \cup \{\fp\}$,
where $\fp=\cG_{a,b,c}$.
Suppose $p \ge 5$ and not divisible by any $\fq \in S \cup \{\fp\}$.
The determinant of $\overline{\rho}$ is the mod $p$ cyclotomic character.
Its Serre conductor $\fN$ is supported on $S \cup \{\fp\}$
and belongs to a finite set
that depends only on the field $K$. The representation
$\overline{\rho}$ is odd (in the sense of Section \ref{sec:Reps}) and is finite flat at every $\fq$ over $p$. 
\end{lemma}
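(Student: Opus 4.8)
The plan is to verify each assertion of Lemma~\ref{lem:Serre} separately, drawing on the standard analysis of the Frey curve together with the local computations already carried out in this section.

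First I would establish semistability away from $S \cup \{\fp\}$. For a prime $\fq \nmid 2$ with $\fq \ne \fp$, we have $\fq \nmid \cG_{a,b,c}$, so $\fq$ divides at most one of $a,b,c$; the usual discriminant/conductor formulas for the Frey curve $E : Y^2 = X(X-a^p)(X+b^p)$ give $\Delta_E = 16\,a^{2p}b^{2p}c^{2p}$ and $c_4 = 16(a^{2p} - a^p b^p + \cdots)$ (the standard Legendre-family expressions), from which $\ord_\fq(j) = 2\ord_\fq(2^4\cdot\text{stuff}) - \text{(positive)}$ is either $\ge 0$ or negative and prime-to-$p$ divisible, and in either case $E$ acquires good or multiplicative reduction — one checks via Tate's algorithm that at such $\fq$ the curve is already semistable. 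This is the routine Frey-curve input and I would cite \cite{FS} (Lemma 3.5 or thereabouts) rather than redo Tate's algorithm.

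Next, the determinant: for any elliptic curve $E/K$ and any $p$, the Weil pairing identifies $\det \overline{\rho}_{E,p}$ with the mod $p$ cyclotomic character $\chi_p$; this is immediate and needs only the hypothesis that $\overline{\rho}$ makes sense (i.e. $p$ odd). For the Serre conductor $\fN$: by definition it is the prime-to-$p$ part of the Artin conductor of $\overline{\rho}$, which by semistability is supported exactly on the primes of bad reduction, hence on $S \cup \{\fp\}$ (note $\fp \nmid p$ and the primes in $S$ are assumed prime-to-$p$). At primes in $S \cup \{\fp\}$ the local conductor exponent is bounded in terms of the ramification of $K$ at $2$ alone (a bound of the usual Serre type, $\le 2 + 3\ord_\fq(2) + \dotsb$, or simply: finitely many possibilities once $K$ is fixed), and there are finitely many such primes with bounded exponents, so $\fN$ ranges over a finite set depending only on $K$. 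The trivial-character assertion follows since $\det\overline{\rho} = \chi_p$ has prime-to-$p$ conductor trivial and its prime-to-$p$ part as a character is trivial.

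Then oddness: if $K$ has a complex embedding in every pair or is totally complex, $\overline{\rho}$ is odd by the convention of Section~\ref{sec:Reps}; if $K$ has a real place $\sigma$, complex conjugation $c_\sigma \in G_K$ acts on $E[p]$ with determinant $\chi_p(c_\sigma) = -1$, so $\overline{\rho}$ is odd in the genuine sense. Finally, finite-flatness at $\fq \mid p$: since $p$ is assumed not divisible by any prime in $S \cup \{\fp\}$, every $\fq \mid p$ is a prime of good reduction for $E$ (it is away from $S \cup \{\fp\}$ where $E$ is semistable, and in fact away from the full bad locus), so $E[p]$ extends to a finite flat group scheme over $\Z_{K_\fq}$ by the Néron model / good reduction argument. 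The main obstacle is really just bookkeeping: pinning down that the local conductor exponents at the primes above $2$ are uniformly bounded (hence $\fN$ lies in a finite set) requires invoking the standard bounds on conductor exponents of $2$-dimensional mod $p$ representations in terms of wild ramification, which one takes from \cite{SerreDuke} or the conductor estimates in \cite{FS}; everything else is a direct consequence of the Weil pairing, Tate's algorithm, and the good-reduction criterion.
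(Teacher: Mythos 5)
Your treatment of the determinant, oddness, and the finiteness of the set of possible Serre conductors is essentially the paper's (Weil pairing; conductor-exponent bounds at the primes of $S\cup\{\fp\}$ as in \cite[Theorem IV.10.4]{SilvermanII}; though note that for ``finite set depending only on $K$'' you must also use that $\fp$ ranges over the fixed finite set $\cH$). The genuine gap is in the last two assertions. You claim that every prime $\fq\mid p$ is a prime of good reduction for $E$ (``away from the full bad locus''), and likewise that the prime-to-$p$ Artin conductor is supported on the bad-reduction primes ``hence on $S\cup\{\fp\}$''. Both statements rest on the false premise that the Frey curve has good reduction outside $S\cup\{\fp\}$. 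In fact $E_{a,b,c}$ has multiplicative reduction at every prime $\fq\notin S\cup\{\fp\}$ dividing $abc$ (there the model is minimal with $\ord_\fq(\Delta)=2p\,\ord_\fq(abc)>0$), and nothing prevents such primes from lying above $p$. So the N\'eron-model/good-reduction argument for finite flatness does not apply as stated, and ``supported on the bad primes'' would give a conductor supported on all primes dividing $2abc$ together with $\fp$, not on $S\cup\{\fp\}$.

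What rescues both assertions --- and is precisely the point of the Frey construction --- is the divisibility $p\mid\ord_\fq(\Delta)$ at every $\fq\notin S\cup\{\fp\}$, coming from $\ord_\fq(\Delta)=2p\,\ord_\fq(abc)$ for the minimal model. By the standard criterion (cf.\ \cite{SerreDuke}, via the theory of the Tate curve), a curve that is semistable at $\fq$ with minimal discriminant valuation divisible by $p$ yields a mod $p$ representation that is unramified at $\fq$ when $\fq\nmid p$ and finite flat at $\fq$ when $\fq\mid p$; this handles the good and multiplicative cases uniformly and is exactly the argument the paper gives. Your write-up needs this step: without it, neither the support statement for $\fN$ nor finite flatness at primes above $p$ follows.
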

\begin{proof}
The statement about the determinant
 is a well-known consequence of the theory of 
the Weil pairing on $E[p]$. This immediately implies oddness. Let $\fq \notin S \cup \{\fp\}$
be a prime of $K$.
Let $c_4$ and $\Delta$ denote the usual invariants of the model $E$
given in \eqref{eqn:Frey}. These are given by the formulae
\[
c_4=2^4 (b^{2p}-a^p c^p), \qquad
\Delta=2^4 a^{2p} b^{2p} c^{2p}.
\]
It follows from $\fq \notin S \cup \{\fp\}$ (together with
the relation $a^p+b^p+c^p=0$) that $\fq$ cannot divide
both $c_4$ and $\Delta$. Thus the given model is minimal,
and $E$ is semistable at $\fq$. Moreover, $p \mid \ord_\fq(\Delta)$.
It follows (c.f. \cite{SerreDuke}) that $\overline{\rho}$ is unramified 
at $\fq$ if $\fq \nmid p$ and finite flat at $\fq$ if $\fq \mid p$.
It remains to show that the set of possible Serre conductors
$\fN$ is finite. These can only be divisible
by primes $\fq \in S \cup \{ \fp\}$. Moreover $\fN$ 
divides the conductor $N$ of $E$, thus 
$\ord_\fq(\fN) \le \ord_\fq(N) \le 2 + 3 \ord_\fq(3)+6 \ord_\fq(2)$
by \cite[Theorem IV.10.4]{SilvermanII}. It follows that the list of
possible Serre conductors is finite. Moreover as $\fp \in \cH$
and the set $\cH$ depends only on $K$,
this list of Serre conductors depends only on $K$.
\end{proof}

\section{Surjectivity of $\overline{\rho}$}

To apply Conjecture~\ref{conj:Serre} to the mod $p$ representation
$\overline{\rho}$ of the Frey curve $E$, we need to show that
$\overline{\rho}$ is absolutely irreducible. We have been unable to 
find a theorem in the literature that immediately implies this.
However, guided by the work of Momose \cite{Momose}, 
Kraus \cite{KrausIrred} and David \cite{DavidI} (all
relying on Merel's uniform boundedness theorem \cite{Merel}),
we prove the following result which is sufficient for our purpose.
\begin{prop}\label{prop:irred}
Let $L$ be a Galois number field and let $\fq$ be a prime of $L$.
There is a constant $B_{L,\fq}$ such that the following is true.
Let $p> B_{L,\fq}$ be a rational prime. Let $E/L$ be
an elliptic curve 
that is semistable at all $\gp \mid p$ and
having potentially multiplicative reduction at $\fq$.
Then $\overline{\rho}_{E,p}$
is irreducible.
\end{prop}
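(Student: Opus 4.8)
The plan is to run the standard Momose--Kraus--David argument for irreducibility of $\overline{\rho}_{E,p}$, adapted to the number field setting. Suppose for contradiction that $\overline{\rho}_{E,p}$ is reducible, so that $E[p]$ has a $G_L$-stable line, and let $\phi : G_L \to \F_p^\times$ be the character giving the action of $G_L$ on that line. Then $\det \overline{\rho}_{E,p} = \phi \cdot \phi'$ where $\phi' = \chi_p \phi^{-1}$ and $\chi_p$ is the mod $p$ cyclotomic character. The first step is to understand $\phi$ locally. At the prime $\fq$ where $E$ has potentially multiplicative reduction, the theory of the Tate curve (as in Lemma~\ref{lem:inertiaGeneral}(ii)) tells us that, after a quadratic twist, $\overline{\rho}_{E,p}|_{I_\fq}$ is $\begin{pmatrix} \chi_p & * \\ 0 & 1 \end{pmatrix}$ up to the twist, so on the stable line $\phi|_{I_\fq}$ is either trivial or $\chi_p$ up to a quadratic character — in particular $\phi^2|_{I_\fq} \in \{1, \chi_p^2\}$. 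At the primes $\gp \mid p$, semistability forces $E[p]$ to come from a finite flat (indeed semistable, ordinary or supersingular) group scheme, so by Raynaud/Serre the restriction $\phi|_{I_\gp}$ is a power $\chi_p^{a_\gp}$ of the (tame) cyclotomic character with $a_\gp \in \{0,1\}$ in the ordinary case, and $\phi^{1+\text{Frob}}$ behaves like $\chi_p$ in the supersingular case; in all cases $\phi^{12}$ (or $\phi^{24}$) restricted to inertia at $p$ is a controlled power of $\chi_p$.

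The second step is to globalise: raising $\phi$ to a suitable fixed power $m$ (a divisor of $24$, or $12$, times something depending only on the degree of $L$) kills ramification away from $p$ and makes the character at $p$ a power of $\chi_p^m$, so that $\phi^m \chi_p^{-t}$ is a character of $G_L$ unramified everywhere, hence factors through the class group $\Cl(L)$ and therefore has order bounded solely in terms of $L$. This yields a congruence, for every prime $\fl$ of $L$ of good reduction coprime to $p$, of the shape $a_\fl^2 \equiv (\text{explicit power of } \Norm\fl) \pmod{\mathfrak{P}}$ where $a_\fl = \Tr\overline{\rho}_{E,p}(\Frob_\fl)$ and $\mathfrak{P} \mid p$; here one uses that $\phi(\Frob_\fl) + \phi'(\Frob_\fl) = a_\fl$ together with $\phi(\Frob_\fl)\phi'(\Frob_\fl) = \Norm\fl$. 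The key point is Merel's uniform boundedness theorem (via Kamienny--Mazur): since $E/L$ has a $G_L$-rational $p$-isogeny and $L$ is of bounded degree, for $p$ large the isogeny character $\phi$ must, after the above normalisation, be forced into one of finitely many shapes, and the torsion point it produces on a twist of $E$ would violate uniform boundedness — alternatively, one directly bounds $p$ by choosing an auxiliary prime $\fl$ (with Chebotarev, splitting completely in a fixed extension of $L$ depending on the finitely many possible exponents) for which the congruence $a_\fl^2 \equiv (\Norm\fl)^{s} \cdot 4 \pmod{\mathfrak{P}}$ with the integer $a_\fl$ satisfying the Hasse bound $|a_\fl| \le 2\sqrt{\Norm\fl}$ can only hold for $p$ below an explicit bound $B_{L,\fq}$.

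I would organise the write-up as: (1) set up the isogeny character $\phi$ and record $\det = \phi\phi'$; (2) local analysis at $\fq$ using the Tate curve; (3) local analysis at $\gp \mid p$ using finite-flatness and Raynaud's theorem on the possible tame inertia weights, concluding that a bounded power of $\phi$ is, on inertia at $p$, a bounded power of $\chi_p$; (4) the everywhere-unramified-after-twist argument bounding the order of $\phi^m\chi_p^{-t}$ by $h_L$ or $\#\Cl(L)$ times a constant; (5) Chebotarev choice of an auxiliary prime $\fl$ and the Hasse-bound contradiction, giving $B_{L,\fq}$. The main obstacle — and the reason the constant depends on $\fq$ and not just on $[L:\Q]$ — is step (3) combined with the possibility that $\phi$ ramifies at $\fq$: one must carefully separate the cases where the semistable prime $\fq$ (of potentially multiplicative reduction) is or is not above $p$, and handle the supersingular-at-$p$ case where the image of inertia is non-abelian and $\phi$ only lives over a quadratic extension of the residue field, so that the "power of cyclotomic" statement needs Raynaud's classification rather than a naive argument; getting a clean uniform statement here, valid for all $E$ with the stated reduction behaviour, is the delicate part, whereas steps (4) and (5) are essentially the classical Momose/Kraus bookkeeping.
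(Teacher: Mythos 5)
There is a genuine gap, and it sits exactly at the point where the hypothesis of potentially multiplicative reduction at $\fq$ has to be used \emph{globally}. Your step (4) asserts that after raising to a bounded power one gets a single twist $\phi^m\chi_p^{-t}$ that is unramified everywhere and hence factors through $\Cl(L)$. But the local exponents at the primes above $p$ need not agree: for each $\gp\mid p$ one only gets $\phi^{12}\vert_{I_\gp}=(\chi_p\vert_{I_\gp})^{s_\gp}$ with $s_\gp\in\{0,12\}$, and if the $s_\gp$ take both values then no single $t$ makes $\phi^m\chi_p^{-t}$ unramified at all of them. This mixed case is not a technicality one can wave away: it genuinely occurs for elliptic curves with CM by an imaginary quadratic subfield of $L$, which are semistable (indeed of good reduction) at primes above $p$ and have reducible $\overline{\rho}_{E,p}$ for infinitely many $p$; for such curves your endgame (congruences $a_{\mathfrak{l}}^2\equiv(\norm\mathfrak{l})^s\cdot 4$ plus the Hasse bound at a Chebotarev-chosen auxiliary prime) is satisfiable for infinitely many $p$, so it cannot by itself bound $p$. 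The only hypothesis available to kill the mixed case is the potentially multiplicative reduction at $\fq$, and your sketch uses $\fq$ only through inertia ($\phi^2\vert_{I_\fq}=1$), which is far too weak — a bounded power of the isogeny character is unramified away from $p$ in any case.

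The paper's proof supplies precisely the missing mechanism: at $\fq$ the Tate curve gives the \emph{Frobenius} values, $\{\lambda^2(\sigma_\fq),{\lambda'}^2(\sigma_\fq)\}\equiv\{1,\norm(\fq)^2\}\pmod p$; replacing $E$ by the $p$-isogenous curve swaps $\lambda$ and $\lambda'$, so one may normalise $\lambda^{12}(\sigma_\fq)\equiv 1$. Then, writing $\fq^r=\alpha\Z_L$ and applying David's congruence $\prod_{\tau\in G}\tau(\alpha)^{s_\tau}\equiv\prod(\lambda^{12}(\sigma_\fr))^{\upsilon_\fr(\alpha)}\pmod{\gp_0}$, the right-hand side is $1$, while the left-hand side lies in a finite set depending only on $L$ and $\fq$; for $p$ large this forces $s_\tau=0$ for every $\tau$, i.e.\ $\lambda^{12}$ unramified everywhere, whence a $p$-torsion point over an extension $M$ of degree $12h_L[L:\Q]$ and a bound on $p$ by Merel. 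Note also that the dependence of $B_{L,\fq}$ on $\fq$ comes from this finite set of algebraic numbers built from a generator of $\fq^r$ (via $\norm(\beta-1)$), not, as you suggest, from the possibility that $\fq$ lies above $p$ or from subtleties in the supersingular case — for $p$ large one automatically has $\fq\nmid p$, and semistability at $p$ already gives $s_\gp\in\{0,12\}$ by David's results. Your alternative ending via an auxiliary prime and the Hasse bound is fine as a replacement for the Merel step once all $s_\tau$ are known to vanish, but without the Frobenius-at-$\fq$ plus principal-ideal argument the proof does not go through.
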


Before proving Proposition~\ref{prop:irred} we apply it
to the Frey curve. 
\begin{corollary}\label{cor:surj}
Let $K$ be a number field, and suppose 
(in the notation of the Introduction) that $T \ne \emptyset$.
There is a constant $C_K$ such that if $p>C_K$ and $(a,b,c) \in \ZK^3$
is a non-trivial solution to the Fermat equation with exponent $p$,
and scaled so that $\cG_{a,b,c} \in \cH$, then $\overline{\rho}_{E,p}$
is surjective, where $E$ is the Frey curve given in \eqref{eqn:Frey}.
\end{corollary}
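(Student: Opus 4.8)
The plan is to reduce the statement about surjectivity of $\overline{\rho}_{E,p}$ to irreducibility (via Proposition~\ref{prop:irred}) together with a standard argument ruling out the normalizer-of-Cartan and exceptional cases, using the special local structure of the Frey curve that has already been established in Lemmas~\ref{lem:mPImage}, \ref{lem:mimage} and \ref{lem:Serre}. First I would fix a prime $\mP \in T$ (possible since $T \ne \emptyset$) and set $C_K \ge \max\{B_{K,\mP}, 4\ord_\mP(2), 5, \text{(primes below } S\cup\cH)\}$, so that all the preceding lemmas apply simultaneously. By Lemma~\ref{lem:mPImage}(i), $E$ has potentially multiplicative reduction at $\mP$, and by Lemma~\ref{lem:Serre} it is semistable at every $\gp \mid p$; hence Proposition~\ref{prop:irred} applies with $L = K$, $\fq = \mP$, giving that $\overline{\rho} = \overline{\rho}_{E,p}$ is irreducible for $p > C_K$.

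Next I would upgrade irreducibility to surjectivity. Since $p \ge 5$, the classification of maximal subgroups of $\GL_2(\F_p)$ (equivalently, of subgroups of $\PGL_2(\F_p)$) says that an irreducible image $\overline{\rho}(G_K)$ is either all of $\GL_2(\F_p)$ (more precisely, contains $\SL_2(\F_p)$, and then the determinant being the full cyclotomic character by Lemma~\ref{lem:Serre} forces surjectivity onto $\GL_2(\F_p)$), or is contained in the normalizer of a (split or nonsplit) Cartan subgroup, or its projective image is one of the exceptional groups $A_4$, $S_4$, $A_5$. The exceptional cases are eliminated immediately by Lemma~\ref{lem:mPImage}(ii): there $p \mid \#\overline{\rho}(I_\mP)$, so the projective image has order divisible by $p > 5 \ge |A_5|$-exponent considerations — concretely, $\#\mathrm{PGL}_2$-image is divisible by $p$, which cannot happen inside a group of order at most $60$ once $p > 60$ (absorbed into $C_K$). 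For the normalizer-of-Cartan case: the image of $I_\mP$ has order divisible by $p$, so $\overline{\rho}(I_\mP)$ lands in the Cartan subgroup itself (the unique index-$2$ subgroup of the normalizer containing elements of order $p$... rather, the Cartan has order prime to $p$, so this is a contradiction). Indeed a Cartan subgroup of $\GL_2(\F_p)$ has order $(p-1)^2$ or $p^2-1$, in both cases coprime to $p$, and its normalizer has order $2(p-1)^2$ or $2(p^2-1)$, still coprime to $p$; so $p \mid \#\overline{\rho}(I_\mP) \le \#\overline{\rho}(G_K)$ is incompatible with the image lying in a normalizer of Cartan. Hence the only surviving possibility is that $\overline{\rho}$ is surjective.

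The main obstacle is really packaged inside Proposition~\ref{prop:irred} (whose proof, following Momose--Kraus--David and Merel, we have not yet given); granting that, the corollary is a clean group-theoretic deduction. The one subtlety I would be careful about is making sure the constant $C_K$ is a genuine function of $K$ alone: $B_{K,\mP}$ depends on $K$ and the chosen $\mP \in T$, but we fix one such $\mP$ once and for all, and the quantities $\ord_\mP(2)$, the finite set $\cH$, and the bound $60$ are all intrinsic to $K$; the scaling hypothesis $\cG_{a,b,c} \in \cH$ is exactly what lets Lemmas~\ref{lem:mPImage} and~\ref{lem:Serre} apply uniformly over all non-trivial solutions. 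Thus setting $C_K := \max\{B_{K,\mP},\, 4\ord_\mP(2),\, 60\}$ suffices, and the proof is complete.
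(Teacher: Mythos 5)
Your overall route is the same as the paper's: get irreducibility from Proposition~\ref{prop:irred} using the potentially multiplicative reduction at $\mP \in T$ (Lemma~\ref{lem:mPImage}) and semistability at primes above $p$ (Lemma~\ref{lem:Serre}), then use $p \mid \# \overline{\rho}(I_\mP)$ to force the image to contain $\SL_2(\F_p)$, and finish with the determinant. However, there is a genuine misstep in the first step: Proposition~\ref{prop:irred} is stated only for a \emph{Galois} number field $L$, and its proof uses this (the group $G=\Gal(L/\Q)$ acts transitively on the primes above $p$ in the argument with David's result). You invoke it ``with $L=K$, $\fq=\mP$'', which is not legitimate for a general number field $K$. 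The paper instead passes to the Galois closure $L$ of $K$ and a prime $\fq$ of $L$ above $\mP$, applies the proposition there to the base-changed curve, and then notes that irreducibility of $\overline{\rho}_{E,p}\vert_{G_L}$ implies irreducibility over $G_K$; the resulting constant $B_{L,\fq}$ still depends only on $K$ because $L$ is determined by $K$ and $\fq \mid \mP \mid 2$. Without this detour your appeal to the proposition simply does not apply when $K/\Q$ is not Galois.

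A second, smaller point: at the end you assert that the determinant being the mod $p$ cyclotomic character ``forces surjectivity onto $\GL_2(\F_p)$''. The character $\chi_p : G_K \to \F_p^\times$ is surjective only when $K \cap \Q(\zeta_p)=\Q$, which can fail for primes ramifying in $K$; the paper explicitly enlarges $C_K$ so that this intersection is trivial for all $p>C_K$. You should add this enlargement (it is harmless, since only finitely many primes ramify in $K$). Your group-theoretic reduction via normalizers of Cartan subgroups and exceptional images is fine, though it is an unnecessarily long way of quoting the standard fact the paper uses directly, namely that a subgroup of $\GL_2(\F_p)$ containing an element of order $p$ is either reducible or contains $\SL_2(\F_p)$.
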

\begin{proof}
We know from Lemma~\ref{lem:mPImage} that $E$ has potentially
multiplicative reduction at $\mP \in T$. Moreover, from the 
proof of Lemma~\ref{lem:Serre}, we know that $E$ is semistable
away from the primes above $2$ and those contained in $\cH$.
Let $L$ be the Galois closure of $K$, and let $\fq$ be a prime
of $L$ above $\mP$.
Applying Proposition~\ref{prop:irred} 
we see that there is a constant
$B_{L,\fq}$ such that for $p>B_{L,\fq}$ we have that 
$\overline{\rho}_{E,p} (G_{L})$
is irreducible. Now $\fq \mid \mP \mid 2$ and so  $B_{L,\fq}$
depends only on $K$ and we denote it by $C_K$. 
We enlarge $C_K$ if needed so that for $C_K> 4 \ord_\mP(2)$.
It follows 
from Lemma~\ref{lem:mPImage} that the image of $\overline{\rho}_{E,p}$
contains an element of order $p$. Any subgroup of $\GL_2(\F_p)$
that contains an element of order $p$ is either reducible or contains
$\SL_2(\F_p)$. It follows for $p >C_K$ that the image in fact contains
$\SL_2(\F_p)$. Moreover, again enlarging $C_K$ if necessary, 
we may suppose that $K \cap \Q(\zeta_p)=\Q$ for $p>C_K$. Thus $\chi_p=\det(\overline{\rho}_{E,p})$ is surjective, completing the proof.
\end{proof}

\subsection{Proof of Proposition~\ref{prop:irred}}

Suppose $\overline{\rho}_{E,p}$ is reducible. Thus,
\begin{equation}\label{eqn:matrix}
\overline{\rho}_{E,p} \sim 
\begin{pmatrix}
\lambda & * \\
0 & \lambda^\prime
\end{pmatrix} \, ,
\end{equation}
where $\lambda$, $\lambda^\prime \, : \, G_L \rightarrow \F_p^*$ are characters,
and
 $\lambda \lambda^\prime= \det(\overline{\rho}_{E,p})=\chi_p$ is the mod $p$
cyclotomic character.
We suppose from now on that $p$ is unramified in $L$ and that
$E$ is semistable at all $\gp \mid p$.

\begin{lemma}
Write $\sigma_\fq \in G_L$ for a Frobenius element at $\fq$.
Then $\lambda^2(\sigma_\fq)$, ${\lambda^\prime}^2(\sigma_\fq)$
are (up to reordering) congruent to $1$, $\norm(\fq)^2$ modulo $p$.
\end{lemma}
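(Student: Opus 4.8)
The plan is to exploit the fact that the Frey curve $E$ has potentially multiplicative reduction at $\fq$ (a hypothesis of Proposition~\ref{prop:irred}), which pins down the restriction of $\overline{\rho}_{E,p}$ to the inertia subgroup $I_\fq$, and then to combine this local information with the global constraint $\lambda\lambda' = \chi_p$. First I would note that since $E$ has potentially multiplicative reduction at $\fq$, after a quadratic twist it acquires split multiplicative reduction, so the theory of the Tate curve describes $\overline{\rho}_{E,p}\vert_{I_\fq}$: up to the quadratic twisting character, inertia acts through a unipotent group, so on the semisimplification the characters $\lambda$ and $\lambda'$ restricted to $I_\fq$ are either both trivial or both equal to the quadratic character $\phi$ coming from the twist. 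In either case $\lambda^2$ and $\lambda'^2$ are unramified at $\fq$, hence their values on $\sigma_\fq$ are well defined independent of the choice of Frobenius lift.

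Next I would bring in the well-known congruence for traces of Frobenius on $E[p]$ at primes of (potentially) multiplicative reduction. Write $N=\norm(\fq)$. At a prime of good reduction we would have $\lambda(\sigma_\fq)+\lambda'(\sigma_\fq)\equiv a_\fq(E)\pmod p$ and $\lambda(\sigma_\fq)\lambda'(\sigma_\fq)\equiv N\pmod p$; for primes of multiplicative reduction the analogue says $a_\fq(E)\equiv \pm(1+N)\pmod p$ after accounting for the twist, which feeds into the relation on the characters. The cleaner route, avoiding twists, is to work with $\lambda^2$ and $\lambda'^2$: these are honestly unramified characters of $G_L$ at $\fq$ whose product is $\chi_p^2$, so $\lambda^2(\sigma_\fq)\cdot\lambda'^2(\sigma_\fq)\equiv N^2\pmod p$. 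To get the individual values I would use that $\overline{\rho}_{E,p}\vert_{I_\fq}$, being an extension of $\phi$ by $\phi$ with $\phi^2$ trivial, forces $\overline{\rho}_{E,p}\vert_{G_{L_\fq}}$ to have (after the twist) Frobenius eigenvalues $\{u, Nu^{-1}\cdot N\}=\{u,N\}$-type shape — more precisely the Tate curve gives that the unramified-quadratic-twisted representation has $\sigma_\fq$ acting with eigenvalues $1$ and $N$ on $E[p]$ up to sign, so squaring kills the sign and yields $\lambda^2(\sigma_\fq),\lambda'^2(\sigma_\fq)\equiv 1, N^2\pmod p$ in some order.

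Concretely the steps are: (1) reduce to the split multiplicative case by a quadratic twist, recording the twisting character $\phi$ with $\phi^2=1$; (2) apply Proposition~V.6.1 of \cite{SilvermanII} (the Tate curve) to identify $\overline{\rho}_{E,p}\vert_{G_{L_\fq}}$ up to twist with the representation on the $p$-torsion of $\mathbb{G}_m/q^{\mathbb Z}$, whose $\sigma_\fq$-eigenvalues are $1$ and $N$; (3) untwist and square, so that $\lambda^2$ and $\lambda'^2$ are unramified at $\fq$ with $\{\lambda^2(\sigma_\fq),\lambda'^2(\sigma_\fq)\}=\{1,N^2\}$ modulo $p$; (4) check consistency with the global identity $\lambda^2\lambda'^2=\chi_p^2$, which indeed gives the product $N^2$, so no contradiction yet — the lemma is purely a structural input for what follows. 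The main obstacle I anticipate is handling the quadratic twist cleanly: one must make sure that squaring the characters genuinely removes the ramification at $\fq$ coming from $\phi$ (it does, since $\phi^2=1$), and one must be careful that the potentially multiplicative — as opposed to genuinely multiplicative — case introduces no extra subtlety, which is exactly why passing to $\lambda^2,\lambda'^2$ rather than $\lambda,\lambda'$ is the right move. Everything else is a direct reading-off from the Tate curve parametrization.
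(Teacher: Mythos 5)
Your proposal is correct and takes essentially the same route as the paper: both arguments rest on the fact that potentially multiplicative reduction at $\fq$ forces $\overline{\rho}_{E,p}\vert_{D_\fq}$ to have semisimplification $\phi \oplus \phi\,\chi_p$ with $\phi$ at worst quadratic (Tate curve plus a quadratic twist), so that squaring kills $\phi$ and evaluating at $\sigma_\fq$ yields $1$ and $\norm(\fq)^2$ modulo $p$ up to order. Your additional observation that $\lambda^2$ and ${\lambda^\prime}^2$ are unramified at $\fq$, making their values at Frobenius well defined, is a point the paper leaves implicit.
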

\begin{proof}
Write $D_\fq$ for the decomposition subgroup at $\fq$.
 As $E$ has potentially multiplicative reduction at $\fq$,
we know that $\overline{\rho}_{E,p} \vert_{D_\fq}$ is 
up to semisimplification equal to $\phi \oplus \phi \cdot \chi_p$,
where $\phi$ is at worst a quadratic character. The
lemma follows since $\chi_p(\sigma_\fq) \equiv \norm(\fq) \pmod{p}$.
\end{proof}

From \eqref{eqn:matrix}, there is a non-zero $P \in E[p]$
such that $\sigma(P)=\lambda(\sigma) P$ for $\sigma \in G_K$. 
Replacing $E$ by $p$-isogenous $E/\langle P \rangle$ results
in swapping the two characters $\lambda$, $\lambda^\prime$
in \eqref{eqn:matrix}. This allows us to suppose from
now on that $\lambda^2(\sigma_\fq) \equiv 1 \pmod{p}$.

\begin{lemma}\label{lem:David1}
The character $\lambda^{12}$ is unramified away from 
the primes above $p$. 
Let $\gp \mid p$ be a prime of $K$.
Then
\[
\lambda^{12} \vert_{I_\gp}= \left(\chi_p \vert_{I_\gp} \right)^{s_\gp}
\]
where $s_\gp \in \{0,12\}$.
\end{lemma}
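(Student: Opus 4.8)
The plan is to analyze the restriction $\lambda\vert_{I_\gp}$ locally at each prime $\gp \mid p$, using the hypothesis that $E$ is semistable at $\gp$. Since $E$ is semistable at $\gp$, the reduction is either good or multiplicative, and I would split into these two cases. If $E$ has good reduction at $\gp$, then $\overline{\rho}_{E,p}\vert_{I_\gp}$ is finite flat; since $p$ is unramified in $L$ (hence $e(\gp/p)=1 < p-1$ once $p\ge 5$), the theory of finite flat group schemes over $\Z_{L_\gp}$ (Raynaud) forces each Jordan–Hölder character of $\overline{\rho}_{E,p}\vert_{I_\gp}$ to be either trivial or the mod $p$ cyclotomic character $\chi_p$ on inertia. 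Hence $\lambda\vert_{I_\gp} \in \{1,\chi_p\vert_{I_\gp}\}$, so certainly $\lambda^{12}\vert_{I_\gp} = (\chi_p\vert_{I_\gp})^{s_\gp}$ with $s_\gp \in \{0,12\}$ (indeed even with $s_\gp\in\{0,1\}$, but $12$ is the uniform exponent that also works in the multiplicative case). If $E$ has multiplicative reduction at $\gp$, then inertia acts on $E[p]$ through a unipotent subgroup (up to a quadratic twist if the reduction is nonsplit), so on semisimplification $\overline{\rho}_{E,p}\vert_{I_\gp}$ is $\phi \oplus \phi\chi_p$ with $\phi$ of order dividing $2$; therefore $\lambda\vert_{I_\gp}$ equals $\phi$ or $\phi\chi_p$ on inertia, and $\lambda^{12}\vert_{I_\gp}$ is $1$ or $\chi_p^{12}\vert_{I_\gp}$, again of the required form with $s_\gp\in\{0,12\}$.

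For the global ramification statement, I would argue that $\overline{\rho}_{E,p}$ is unramified at every prime $\fr \nmid p$ at which $E$ has good or multiplicative reduction with $p \mid \ord_\fr(\Delta_E)$ — the standard Frey-curve computation, but here we only know $E$ is semistable away from $p$ in general position, so I instead use the following cleaner route: at any $\fr \nmid p$, Grothendieck's semistable reduction criterion and the theory of the Tate curve show that the image of $I_\fr$ under $\overline{\rho}_{E,p}$ is either trivial, of order $2$, or contains an element of order $p$; in all cases $\overline{\rho}_{E,p}(I_\fr)^{12}$, and in particular $\lambda^{12}(I_\fr)$, is trivial once we note the order of $\lambda(I_\fr)$ divides the exponent of $\overline{\rho}_{E,p}(I_\fr)$. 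Actually the sharpest phrasing: for $\fr\nmid p$ the image $\overline{\rho}_{E,p}(I_\fr)$ has order $1$, $2$, $p$ or $2p$ by Lemma~\ref{lem:inertiaGeneral}, and $\lambda$ restricted to a subgroup of order $p$ or $2p$ must be trivial or quadratic because $\lambda$ takes values in $\F_p^\times$ which has order $p-1$ coprime to $p$; hence $\lambda^{12}\vert_{I_\fr}$ is trivial, giving the claimed unramifiedness away from $p$.

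Assembling these, $\lambda^{12}$ is a character of $G_L$ unramified outside $p$ whose restriction to each $I_\gp$ is $(\chi_p\vert_{I_\gp})^{s_\gp}$ with $s_\gp\in\{0,12\}$, which is exactly the statement. I would remark that the choice of exponent $12$ (rather than $2$) is dictated by the good-reduction-with-nontrivial-inertia possibilities and by wanting a single exponent valid simultaneously at all primes, mirroring the classical arguments of Momose and David; the key input that makes the local analysis at $p$ go through is precisely the hypothesis that $E$ is semistable at all $\gp\mid p$, combined with $p$ being unramified in $L$ so that Raynaud's classification of finite flat group schemes applies with the tame bound $e < p-1$. I expect the main obstacle — or at least the only genuinely nontrivial point — to be the good-reduction case: one must invoke the finite-flat (Raynaud/Fontaine) classification to pin down the characters on inertia at $p$, since the naive cyclotomic computation only handles the multiplicative case directly.
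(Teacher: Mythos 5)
Your local analysis at the primes $\gp \mid p$ (good reduction handled by Raynaud/Oort--Tate over an unramified base, so $\lambda\vert_{I_\gp}\in\{1,\chi_p\vert_{I_\gp}\}$; multiplicative reduction handled by the Tate curve) is sound and is essentially the content of the results of David that the paper simply cites (\cite{DavidI}, Propositions 1.4 and 1.5, together with \cite{FSIrred}); note the paper gives no self-contained proof, so you are reconstructing the argument from scratch, which is fine in principle.

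However, your argument for the first assertion --- that $\lambda^{12}$ is unramified away from $p$ --- has a genuine gap. Proposition~\ref{prop:irred} assumes $E$ semistable only at the primes above $p$ (you in fact misquote the hypothesis as ``semistable away from $p$''); at primes $\fr \nmid p$ the curve may well have additive, potentially good reduction, and in the intended application the Frey curve is additive exactly at the primes above $2$ and at $\fp$. At such a prime, Lemma~\ref{lem:inertiaGeneral}(i) gives only $\#\overline{\rho}_{E,p}(I_\fr) \mid 24$; it does \emph{not} say the image has order $1$, $2$, $p$ or $2p$, so your ``sharpest phrasing'' misquotes that lemma, and your earlier alternative (``trivial, of order $2$, or contains an element of order $p$'') omits the same case. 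Moreover, the bound $\#\overline{\rho}_{E,p}(I_\fr)\mid 24$ by itself does not finish the argument: a cyclic image of $\lambda\vert_{I_\fr}$ of order $8$ or $24$ would survive the $12$th power. What is needed is the finer fact that at a prime of potentially good reduction the eigenvalues of inertia elements on $E[p]$ are roots of unity of order $1,2,3,4$ or $6$ (equivalently, the inertia image has exponent dividing $12$, as follows from the classification of the semistability defect in \cite{kraus2} or the standard characteristic-polynomial argument), and this is precisely the reason the exponent $12$ appears in the statement --- your own justification of the $12$ (``a uniform exponent that also works in the multiplicative case'') does not explain it, since the multiplicative case only requires the exponent $2$. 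With that additive case supplied, the rest of your outline goes through.
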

\begin{proof}
The first part of the lemma is Proposition 1.4 and 1.5 of \cite{DavidI}.
The second part is derived in \cite[Proposition 2.1]{FSIrred} from results
found in \cite{DavidI}.
\end{proof}

Now let $G=\Gal(L/\Q)$. As $L/\Q$ is Galois, $G$ acts transitively
on the primes $\gp \mid p$. Write $\gp_0 \mid p$. 
For $\tau \in G$ we write $s_\tau \in \{0,12\}$ for the 
integer $s_\gp$ associated to $\gp=\tau^{-1}(\gp_0)$ in Lemma~\ref{lem:David1}.

\begin{lemma}{(David \cite[Proposition 2.6]{DavidI})}\label{prop:lam12}
Let $\alpha \in L$ be non-zero. Suppose $\ord_\gp(\alpha)=0$
for all $\gp \mid p$. Then
\[
\prod_{\tau \in G} \tau(\alpha)^{s_\tau}  
\equiv
\prod \left(\lambda^{12}(\sigma_\fr) \right)^{\upsilon_\fr(\alpha)}
\pmod{\gp_0},
\]
where the product on the right-hand side 
is taken over all prime $\fr$ in the support of $\alpha$.
\end{lemma}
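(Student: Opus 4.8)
The plan is to recover David's argument (\cite[Proposition 2.6]{DavidI}) by a class field theory computation applied to the character $\psi:=\lambda^{12}$ of $G_L$. By Lemma~\ref{lem:David1} the character $\psi$ is unramified outside $p$, and for every prime $\mP\mid p$ of $L$ one has $\psi\vert_{I_\mP}=(\chi_p\vert_{I_\mP})^{s_\mP}$ with $s_\mP\in\{0,12\}$, where by definition $s_\tau=s_{\tau^{-1}(\gp_0)}$. Since $\psi$ has finite order it factors through $G_L^{\mathrm{ab}}$, which by global class field theory is a quotient of the id\`ele class group $\A_L^\times/L^\times$; write $\mathrm{rec}_v$ for the local reciprocity maps, normalised so that a uniformiser at a finite place $\fr$ maps to the arithmetic Frobenius $\sigma_\fr$ (this is the normalisation under which $\chi_p(\sigma_\fr)\equiv\norm(\fr)\pmod p$, as used above). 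The principal id\`ele with $\alpha$ in every component lies in $L^\times$, hence in the kernel of the global Artin map, so
\[
\prod_v \psi\bigl(\mathrm{rec}_v(\alpha)\bigr)=1,
\]
the product running over all places of $L$ and being finite because $\psi$ is unramified and $\alpha$ a unit at all but finitely many places.

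Next I would evaluate the three types of local factor. At an archimedean place the image of $\mathrm{rec}_v$ has order dividing $2$, and the twelfth power $\psi=\lambda^{12}$ is trivial on any such group, so archimedean places contribute $1$. At a finite prime $\fr\nmid p$ the character $\psi$ is unramified, so writing $\alpha=\pi_\fr^{\ord_\fr(\alpha)}u$ with $u$ a local unit gives $\psi(\mathrm{rec}_\fr(\alpha))=\psi(\sigma_\fr)^{\ord_\fr(\alpha)}$; only the primes in the support of $\alpha$ contribute, and these are prime to $p$ by hypothesis. At a prime $\mP\mid p$ the hypothesis $\ord_\mP(\alpha)=0$ makes $\alpha$ a local unit, so $\mathrm{rec}_\mP(\alpha)\in I_\mP$ and hence $\psi(\mathrm{rec}_\mP(\alpha))=\chi_p(\mathrm{rec}_\mP(\alpha))^{s_\mP}$; by compatibility of the reciprocity maps with the norm $\norm_{L_\mP/\Q_p}$ together with the local formula $\chi_p(\mathrm{rec}_{\Q_p}(x))\equiv x^{-1}\pmod p$ for $x\in\Z_p^\times$ (and the fact that the norm of a local unit is a $p$-adic unit), this equals $\norm_{L_\mP/\Q_p}(\alpha)^{-s_\mP}\bmod p$. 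Substituting into the product formula and isolating the support of $\alpha$ gives
\[
\prod_{\fr}\psi(\sigma_\fr)^{\ord_\fr(\alpha)}\equiv\prod_{\mP\mid p}\norm_{L_\mP/\Q_p}(\alpha)^{s_\mP}\pmod p.
\]

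Finally I would translate the right-hand side into the Galois action. As $L/\Q$ is Galois, $L_\mP/\Q_p$ has Galois group the decomposition group $D_\mP\subseteq G$, so $\norm_{L_\mP/\Q_p}(\alpha)=\prod_{\delta\in D_\mP}\delta(\alpha)$; choosing $\tau_\mP\in G$ with $\tau_\mP(\mP)=\gp_0$ and noting that $\tau_\mP$ fixes $\F_p$ while conjugating $D_\mP$ to $D_{\gp_0}$, this reduces modulo $\gp_0$ to $\prod_{\rho\in D_{\gp_0}\tau_\mP}\rho(\alpha)$. As $\mP$ runs over the primes above $p$ the cosets $D_{\gp_0}\tau_\mP$ run over all left cosets of $D_{\gp_0}$ in $G$, and unwinding $\rho\in D_{\gp_0}\tau_\mP\iff\rho(\mP)=\gp_0$ shows that the coset containing a given $\rho$ is the one attached to $\mP=\rho^{-1}(\gp_0)$, whose exponent $s_\mP$ is by definition $s_\rho$. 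Hence $\prod_{\mP\mid p}\norm_{L_\mP/\Q_p}(\alpha)^{s_\mP}\equiv\prod_{\tau\in G}\tau(\alpha)^{s_\tau}\pmod{\gp_0}$, which together with the previous congruence is the claim. I expect the main thing to watch is the normalisation bookkeeping: the local factors at $\fr\nmid p$ and at $\mP\mid p$ must be computed with compatible normalisations of the reciprocity maps and of $\chi_p$ so that all signs cancel correctly (precisely the care taken in \cite[Proposition 2.6]{DavidI}); the only other slightly delicate point is the coset combinatorics that turns the product of local norms at $p$ into the Galois–twisted product $\prod_\tau\tau(\alpha)^{s_\tau}$ modulo $\gp_0$, since a priori the exponents $s_\mP$ vary with $\mP$.
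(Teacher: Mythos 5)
Your argument is correct: the product formula for the id\`ele class character $\lambda^{12}$, the local evaluations (trivial at archimedean places since $2\mid 12$, Frobenius powers at $\fr\nmid p$, and $\chi_p^{s_\gp}$ on local units at $\gp\mid p$ via norm compatibility), and the coset bookkeeping identifying $\prod_{\gp\mid p}\norm_{L_\gp/\Q_p}(\alpha)^{s_\gp}$ with $\prod_{\tau\in G}\tau(\alpha)^{s_\tau}$ modulo $\gp_0$ all check out, including the sign conventions you flag. The paper itself gives no proof of this lemma --- it is quoted directly from David \cite[Proposition 2.6]{DavidI} --- and your reconstruction is essentially the standard class-field-theoretic argument used there (and in Momose/Kraus), so there is nothing further to compare.
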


We now choose a positive integer $r$ so that $\fq^r$ is principal
and write $\alpha \Z_L=\fq^r$. Since $\lambda^{12}(\sigma_\fq)=1$
we see that
\[
\prod_{\tau \in G} \tau(\alpha)^{s_\tau} \equiv 1 \pmod{\gp_0}.
\]
The left-hand side belongs to a finite set $\cA$ that depends
only on $L$ and $\fq$, since $s_\tau=0$ or $12$
for any $\tau \in G$. Moreover, if we denote the left-hand side
by $\beta$ then $p$ divides $\norm(\gp_0)$ which in turn divides
$\norm(\beta-1)$. We choose $B_{L,\fq} > \norm(\beta-1)$ for
all $\beta \in \cA$ with $\beta \ne 1$. Then for $p> B_{L,\fq}$
we deduce that $\beta=1$. Thus $s_\tau=0$ for all $\tau \in G$.
It follows from Lemma~\ref{lem:David1}  that the character
$\lambda^{12}$ is unramified at all places of $L$. Hence
there is an extension $M/L$ of degree $12 \cdot h_L$ where $h_L$
is the class number of $L$ such that $\lambda \vert_{G_M}=1$.
It follows from \eqref{eqn:matrix} that $E$ has a point
of order $p$ over $M$. Finally applying Merel's uniform boundedness
theorem \cite{Merel}, shows that $p$ is bounded by a constant that
depends only on the degree $[M:\Q]=12 \cdot h_L \cdot [L:\Q]$
completing the proof.

\section{Applying the Conjectures}
This section is devoted to the proof of the following proposition.
\begin{proposition}\label{prop:elliptic}
Let $K$ be a number field. Assume Conjectures~\ref{conj:Serre}
and~\ref{conj:ES}. Suppose, in the notation of Section~\ref{sec:Frey},
that $T \ne \emptyset$. Then there is a constant $B_K$
depending only on $K$ such that the following holds.
Let $(a,b,c) \in \ZK^3$
being a non-trivial solution to the Fermat equation with
exponent $p>B_K$, and we suppose that it is scaled so that
$\cG_{a,b,c}=\fp \in \cH$.  Let $E/K$ be the associated Frey curve defined in (\ref{eqn:Frey}). Then there is an elliptic curve
$E^\prime/K$ such that the following hold:
\begin{enumerate}
\item[(i)] $E^\prime$ has good reduction away from $S \cup \{\fp\}$,
and potentially good reduction away from $S$.
\item[(ii)] $E^\prime$ has full $2$-torsion.
\item[(iii)] $\overline{\rho}_{E,p} \sim \overline{\rho}_{E^\prime,p}$.
\item[(iv)] for $\mP \in T$ we have $\ord_\mP(j^\prime)<0$ where $j^\prime$
is the $j$-invariant of $E^\prime$.
\end{enumerate}
\end{proposition}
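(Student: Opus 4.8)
The plan is to run the standard modularity–level-lowering machinery on $\overline{\rho}=\overline{\rho}_{E,p}$ and then extract the elliptic curve, using the asymptotic hypothesis $p>B_K$ to kill the obstructions that do not occur in the totally real case. First I would invoke Corollary~\ref{cor:surj}: enlarging $B_K$ we may assume $p>C_K$, so $\overline{\rho}$ is surjective, hence absolutely irreducible. By Lemma~\ref{lem:Serre}, for $p$ large enough (in particular coprime to the primes of $S\cup\{\fp\}$) the representation $\overline{\rho}$ is odd, has determinant the mod $p$ cyclotomic character, is finite flat at every prime above $p$, and has Serre conductor $\fN$ lying in a finite set depending only on $K$. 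Thus Conjecture~\ref{conj:Serre} applies and produces a weight $2$ mod $p$ eigenform $\theta$ over $K$ of level $\fN$ with $\Tr(\overline{\rho}(\Frob_\q))=\theta(T_\q)$ for all $\q\nmid p\fN$.

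Next I would lift $\theta$ to characteristic zero. Since $\fN$ ranges over a finite set, Proposition~\ref{prop: lifting} gives a uniform constant $B$ (depending only on $K$) such that for $p>B$ every mod $p$ eigenform of level $\fN$ lifts to a complex eigenform $\ff$ of the same degree and level. Absorbing $B$ into $B_K$, we obtain such an $\ff$. One must check $\ff$ is non-trivial and new: if $\ff$ were trivial then $\overline{\rho}$ would have the same traces of Frobenius as the reducible representation $1\oplus\chi_p$ at almost all primes, contradicting absolute irreducibility for $p$ large (this is exactly the kind of Brauer--Nesbitt / Chebotarev argument one does here, bounding the finitely many primes where a coincidence could occur); replacing $\ff$ by the newform it is equivalent to only shrinks the level within the same finite set. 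Now the eigenvalues $\ff(T_\q)$ are algebraic integers; I would argue that for $p>B_K$ they must in fact be \emph{rational} integers — otherwise, a complex eigenform with genuinely non-rational eigenvalues would force, via the congruence $\theta(T_\q)\equiv\overline{\ff(T_\q)}$ and comparison with $\Tr\overline{\rho}(\Frob_\q)\in\F_p$, a divisibility condition on a fixed nonzero algebraic integer (a difference of conjugate eigenvalues or a norm thereof), and these finitely many eigenforms of the finitely many allowed levels contribute only finitely many bad primes. Hence $\Q_\ff=\Q$.

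With $\ff$ a non-trivial new weight $2$ complex eigenform over $K$ with rational eigenvalues, Conjecture~\ref{conj:ES} yields either an elliptic curve $E_\ff/K$ of conductor $\fN$ satisfying \eqref{eq: property defining E_f}, or (if $K$ is totally complex) possibly a fake elliptic curve $A_\ff/K$. I would rule out the fake elliptic curve case using the inertia incompatibility: by Theorem~\ref{thm:goodredn} a fake elliptic curve has potential good reduction everywhere, so in particular at $\mP\in T$; comparing mod $p$ representations, $\overline{\rho}_{A_\ff,p}$ (the $2$-dimensional piece $\rho_{A_\ff,p}$ of Ohta's decomposition, reduced mod $p$) would then have image of inertia at $\mP$ of order dividing $24$ — but $\overline{\rho}_{E,p}\sim\overline{\rho}_{A_\ff,p}$ has $p\mid\#\overline{\rho}(I_\mP)$ by Lemma~\ref{lem:mPImage}, impossible for $p>24$. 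So we get an honest elliptic curve $E':=E_\ff/K$ with $\overline{\rho}_{E',p}\sim\overline{\rho}$, giving (iii); since its conductor $\fN$ is supported on $S\cup\{\fp\}$ we get good reduction away from $S\cup\{\fp\}$, and combined with Lemma~\ref{lem:mimage} — which shows $\#\overline{\rho}(I_\fp)\mid 24$, so $\overline{\rho}_{E',p}$ is unramified or at worst tamely of bounded ramification at $\fp$, forcing potentially good reduction there for $p$ large via Lemma~\ref{lem:inertiaGeneral}(ii) — we get (i). For (iv): at $\mP\in T$, Lemma~\ref{lem:mPImage}(ii) gives $p\mid\#\overline{\rho}_{E',p}(I_\mP)$, so by Lemma~\ref{lem:inertiaGeneral} $E'$ has potentially multiplicative reduction at $\mP$, i.e. $\ord_\mP(j')<0$. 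Finally (ii): the argument of Freitas--Siksek shows $\overline{\rho}_{E',p}$ has a $G_K$-stable line (in fact the full reducible-at-$\mP$ structure forces, after possibly a further finite set of exceptions controlled by Merel-type bounds already used in Proposition~\ref{prop:irred}, that $E'$ acquires full $2$-torsion over $K$ because $E'[p]\cong E[p]$ and a curve in the right isogeny class with the right mod $p$ structure must be twist-equivalent to a Frey-type curve with full $2$-torsion) — I would follow \cite[Section 5]{FS} here essentially verbatim, as this step is insensitive to the field being totally real. The main obstacle I expect is the step forcing $\Q_\ff=\Q$ and simultaneously the liftability of $\theta$: making the constant uniform in $p$ requires that the finite set of possible levels (hence the finite set of candidate eigenforms) be pinned down first, and then carefully bounding, over this finite list, the primes $p$ at which either a non-lift or a spurious non-rational congruence could occur — everything downstream is the same inertia bookkeeping already set up in Lemmas~\ref{lem:inertiaGeneral}--\ref{lem:Serre}.
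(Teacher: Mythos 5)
Most of your proposal tracks the paper's own proof: surjectivity via Corollary~\ref{cor:surj}, Conjecture~\ref{conj:Serre} applied using Lemma~\ref{lem:Serre}, lifting over the finite set of Serre conductors via Proposition~\ref{prop: lifting}, Mazur's trace-comparison argument forcing $\Q_\ff=\Q$, Conjecture~\ref{conj:ES}, and the elimination of fake elliptic curves by playing Theorem~\ref{thm:goodredn} against Lemma~\ref{lem:mPImage}; parts (i), (iii), (iv) are then the same inertia bookkeeping as in the paper. (For potentially good reduction at $\fp$ you should say explicitly that $\lvert\ord_\fp(j^\prime)\rvert$ is bounded in terms of $K$ alone, because $E^\prime$ comes from the finite list of candidate eigenforms; this is what rules out the case $p\mid\ord_\fp(j^\prime)$ of Lemma~\ref{lem:inertiaGeneral}(ii) uniformly in $p$.)

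The genuine gap is part (ii). Your justification --- that $\overline{\rho}_{E^\prime,p}$ has a $G_K$-stable line and that a curve ``with the right mod $p$ structure must be twist-equivalent to a Frey-type curve with full $2$-torsion'' --- does not work: $\overline{\rho}_{E^\prime,p}\sim\overline{\rho}_{E,p}$ is surjective by Corollary~\ref{cor:surj}, so there is no stable line, and the mod $p$ representation does not see the $2$-torsion of $E^\prime$ in any such direct way; Merel-type bounds play no role in this step. The paper's argument is instead: if neither $E^\prime$ nor any curve $2$-isogenous to it has full $2$-torsion, then by a theorem of Katz (Lemma~\ref{lem:2tors}) there are infinitely many primes $\fq$ with $4\nmid\#E^\prime(\F_\fq)$; fix such a $\fq\notin S\cup\{\fp\}$ and use that the Frey curve \eqref{eqn:Frey} has full $2$-torsion, so at a prime of good reduction $4\mid\#E(\F_\fq)$ and $\#E(\F_\fq)\equiv\#E^\prime(\F_\fq)\pmod{p}$, while at a prime of multiplicative reduction $\pm(\norm(\fq)+1)\equiv a_\fq(E^\prime)\pmod{p}$; in either case the difference is nonzero, bounded in terms of $\fq$, and divisible by $p$, giving $p\le C_{E^\prime}$ with $C_{E^\prime}$ depending only on $K$ through the finite list of candidate curves (this is Lemma~\ref{lem:FTT}). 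Finally, if it is the $2$-isogenous curve $E^{\prime\prime}$ that has full $2$-torsion, one replaces $E^\prime$ by $E^{\prime\prime}$, which is harmless since $p$ is odd and the isogeny identifies $E^\prime[p]$ with $E^{\prime\prime}[p]$ as Galois modules. Your deferral to \cite{FS} points at the right kind of argument, but as written your step (ii) would fail and needs to be replaced by the Katz-plus-trace-comparison argument above.
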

Assume the hypotheses of the proposition.
We suppose that $p$ is suitably large, and so
by Corollary~\ref{cor:surj}, the representation $\overline{\rho}_{E,p}$
is surjective. We now apply Conjecture~\ref{conj:Serre} and deduce the existence
of a weight $2$ mod $p$ eigenform $\theta$ over $K$ of level $\fN$, with $\fN$ as in Lemma~\ref{lem:Serre}, such that 
for all primes $\q$ coprime to $p\fN$, we have 
\[
\Tr(\overline{\rho}_{E,p}
({\Frob}_\q)) = \theta(T_\q). 
\]
Since there are only finitely many possible levels $\fN$, see Lemma~\ref{lem:Serre}, we can take $p$ large enough to guarantee that, see Proposition \ref{prop: lifting}, 
for any level $\fN$, there will be a weight $2$ complex eigenform $\ff$ with level $\fN$ that is a lift 
of $\theta$. Observe that the list of such eigenforms $\ff$ is finite and
depends only on $K$ (and not on $p$ or the solution $(a,b,c)$).
Thus every constant that depends later of these eigenforms
depends only on $K$.

Next we show that if $p$ is sufficiently large then $\Q_\ff=\Q$.
The idea here is due to Mazur, though apparently unpublished.
It can be found in \cite[Section 9]{Siksek}.

\begin{lemma}
Suppose $\Q_\ff \ne \Q$.
There is a constant $C_\ff$ depending only on $\ff$ such that $p < C_\ff$.
\end{lemma}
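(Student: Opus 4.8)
The plan is to exploit the fact that the eigenform $\ff$ is a lift of the mod $p$ eigenform $\theta$ which matches the Frey curve's traces of Frobenius, and to play off the rationality of those traces against the non-rationality of $\Q_\ff$. Concretely: since $\ff$ arises as a lift of $\theta$, for every prime $\q\nmid p\fN$ we have $\ff(T_\q)\equiv\Tr(\overline{\rho}_{E,p}(\Frob_\q))\pmod{\gp}$ for a suitable prime $\gp\mid p$ of $\Q_\ff$. Now choose any $\tau\in\Gal(\overline\Q/\Q)$ with $\tau|_{\Q_\ff}\ne\mathrm{id}$; its existence is exactly the hypothesis $\Q_\ff\ne\Q$. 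The Hecke eigenvalues $\ff(T_\q)$ are algebraic integers, and $\ff^\tau$ (defined by $\ff^\tau(T_\q)=\tau(\ff(T_\q))$) is again a complex eigenform over $K$ of the same level and degree, since $\mathbb{T}^{(i)}_\C(\fN)$ is defined over $\Z$ and the Galois action permutes ring homomorphisms to $\C$. We will derive a congruence relating $\ff(T_\q)$ and $\ff^\tau(T_\q)=\tau(\ff(T_\q))$ modulo a prime of small residue characteristic unless $p$ is bounded.

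The key step is to pin down $\Tr(\overline{\rho}_{E,p}(\Frob_\q))$ for a well-chosen auxiliary prime $\q$ of $K$ independent of $p$. Take $\q\in S\cup\{\fp\}$ — say $\q=\mP\in T$, which we know is a prime of potentially multiplicative reduction for $E$ by Lemma~\ref{lem:mPImage}, so $\q$ divides $\fN$ and $T_\q$ need not act; instead one uses a prime $\q\nmid p\fN$ together with the standard elliptic-curve bound $\lvert\Tr(\overline{\rho}_{E,p}(\Frob_\q))\rvert=\lvert a_\q(E)\rvert\le 2\sqrt{\norm(\q)}$, which holds because $E$ has good reduction at $\q$ and $a_\q(E)$ is an ordinary integer. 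Since $a_\q(E)\in\Z$, the congruence $\ff(T_\q)\equiv a_\q(E)\pmod{\gp}$ forces $\tau(\ff(T_\q))\equiv a_\q(E)\pmod{\tau(\gp)}$ as well, hence $\ff(T_\q)-\tau(\ff(T_\q))\equiv 0$ modulo a prime above $p$ in the compositum. Therefore $p$ divides $\norm\!\big(\ff(T_\q)-\ff^\tau(T_\q)\big)$. If this norm is nonzero we are done: $p$ is bounded in terms of $\ff$ (and the finitely many choices of $\q$ of norm below $4\cdot(\text{bound})$). The remaining case is $\ff(T_\q)=\ff^\tau(T_\q)$ for all such small $\q$; here one argues that if $\ff(T_\q)$ is fixed by $\tau$ for a set of primes $\q$ of density one — or even enough small primes — then $\tau$ fixes all of $\Q_\ff$, contradicting the choice of $\tau$. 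Making "enough small primes" precise is where effective Chebotarev or the structure of $\mathbb{T}^{(i)}_\C(\fN)$ as a finite $\Z$-algebra (hence generated by $T_\q$ for $\norm\q$ below an explicit bound depending only on $\fN$) enters.

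The main obstacle is the last point: controlling which $\q$ one is allowed to use. We need the auxiliary primes $\q$ to be coprime to $p\fN$ for the congruence to apply, but $p$ is the very quantity we are bounding, so a priori a small prime $\q$ might divide $p$. The clean fix is to choose finitely many primes $\q_1,\dots,\q_m$ of $K$ (of norm bounded only in terms of $\fN$) that generate $\mathbb{T}^{(i)}_\C(\fN)$ and such that $\tau$ moves at least one $\ff(T_{\q_j})$; then for $p$ larger than all the $\norm(\q_j)$ — a bound depending only on $\ff$ — each $\q_j\nmid p$ automatically, and at least one $\norm\!\big(\ff(T_{\q_j})-\ff^\tau(T_{\q_j})\big)$ is a nonzero integer divisible by $p$, giving $p<C_\ff$. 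Packaging this so that $C_\ff$ genuinely depends only on $\ff$ (and hence, via the finite list of lifts, only on $K$) is the delicate bookkeeping, but it is exactly the Mazur-style argument recorded in \cite[Section 9]{Siksek}, and no new ideas beyond careful choice of the $\q_j$ are required.
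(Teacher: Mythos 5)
Your overall strategy---comparing the Hecke eigenvalue $\ff(T_\fq)$ with the rational integer $\Tr(\overline{\rho}_{E,p}(\Frob_\fq))$ via the congruence modulo a prime above $p$, and bounding $p$ by a nonzero norm---is the same as the paper's, but the detour through a Galois conjugate $\ff^\tau$ introduces a genuine gap. From $\ff(T_\fq)\equiv t_\fq \pmod{\gp}$ and $\tau(\ff(T_\fq))\equiv t_\fq \pmod{\tau(\gp)}$ you cannot conclude that $\ff(T_\fq)-\tau(\ff(T_\fq))$ vanishes modulo a single prime above $p$, nor that $p\mid\norm\bigl(\ff(T_\fq)-\ff^\tau(T_\fq)\bigr)$: the two congruences are taken modulo \emph{different} primes above $p$. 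Concretely, take $\Q_\ff=\Q(\sqrt{5})$, $p=11$ split, $\ff(T_\fq)=\sqrt{5}\equiv 4\pmod{\gp}$; then $\norm\bigl(\sqrt{5}-\tau(\sqrt{5})\bigr)=-20$ is not divisible by $11$. So the linchpin of your argument does not follow, and the machinery you then invoke to guarantee a usable prime (generators of the Hecke algebra of bounded norm, effective Chebotarev) is built on that faulty step and is in any case unnecessary. A second, smaller, gap: you assert that $E$ has good reduction at the auxiliary prime, but the reduction type of the Frey curve at a fixed $\fq$ depends on the unknown solution $(a,b,c)$; outside $S\cup\{\fp\}$ one only knows semistability, so the multiplicative case, where the trace is $\pm(\norm(\fq)+1)$, must be allowed (it is harmless, since this is still a bounded rational integer).

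The repair is to drop $\tau$ altogether, which is exactly what the paper does. Since the values $\ff(T_\q)$ generate $\Q_\ff\ne\Q$, one fixes once and for all a prime $\fq\notin S\cup\{\fp\}$ with $\ff(T_\fq)\notin\Q$ (if $\fq\mid p$ then $p\le\norm(\fq)$ and we are already done). The trace $\Tr(\overline{\rho}_{E,p}(\Frob_\fq))$ is either $a_\fq(E)$ or $\pm(\norm(\fq)+1)$, hence lies in a finite set of rational integers depending only on $\fq$. The difference $\ff(T_\fq)-\Tr(\overline{\rho}_{E,p}(\Frob_\fq))$ is then automatically nonzero, because an irrational algebraic number cannot equal a rational integer; it lies in a finite set depending only on $\ff$ and $\fq$, and its norm is divisible by $p$ via the single congruence modulo $\gp$. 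This bounds $p$ in terms of $\ff$ alone, with no need for Galois conjugates, Hecke-algebra generators, or Chebotarev.
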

\begin{proof}
Choose and fix a prime $\fq$ of $K$ such that $\fq \notin S \cup \{ \fp\}$ and 
$\ff(T_\q) \notin \Q$. If $\fq \mid p$ then $p \mid \norm(\fq)$
and so $p$ is bounded. Thus we may suppose that $\fq \nmid p$.
Now $E$ has either good or multiplicative reduction at $\fq$ (since $E$
is semistable away from the primes in $S \cup \{ \fp\}$). 
Note that
\[
\Tr(\overline{\rho}_{E,p}(\Frob_\fq))=
\begin{cases}
\pm (\norm(\fq)+1) & \text{if $E$ has multiplicative reduction at $\fq$}\\
a_\fq(E) & \text{if $E$ has good reduction at $\fq$}.
\end{cases}
\]
In particular, this trace belongs to a finite list of rational integers that 
depends only
on $\fq$. However, there is prime ideal $\mathfrak{p}$ of $\Q_\ff$ over $p$ such that
\[
\Tr(\overline{\rho}_{E,p}(\Frob_\fq)) \equiv \ff(T_\fq)  \pmod{\mathfrak{p}}.
\]
As $\ff(T_\q) \notin \Q$, the difference between the
two sides is non-zero and belongs to a finite set. 
As $\mathfrak{p} \mid p$, the norm
of the difference is divisible by $p$. This gives an upper bound
on $p$ that depends only on $\ff$. 
\end{proof} 
Note that if $\Q_\ff=\Q$ then the above argument fails as the difference
might be zero.

By supposing that $p$ is sufficiently large, we may henceforth
suppose that $\Q_\ff=\Q$. The fact that $\overline{\rho}_{E,p}$ is irreducible 
implies that $\ff$ is non-trivial.  If $\ff$ is not new, we replace it with an equivalent 
new eigenform that is of smaller level. Thus we can assume that 
$\ff$ is new and has level $\fN'$ dividing $\fN$. 
By Conjecture~\ref{conj:ES},
$\ff$ either has an associated elliptic curve $E_\ff/K$ of conductor
$\fN'$, or has an associated fake elliptic curve $A_\ff/K$ of conductor $\fN'^2$.
\begin{lemma}
If $p>24$ then $\ff$ has an associated elliptic curve $E_\ff$.
\end{lemma}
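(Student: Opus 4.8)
The goal is to rule out the fake-elliptic-curve alternative in Conjecture~\ref{conj:ES} for $\ff$ when $p>24$. Suppose for contradiction that $\ff$ has an associated fake elliptic curve $A=A_\ff/K$. The strategy is to compare the $2$-dimensional mod $p$ representation $\rho_{A,p}$ of \eqref{eqn:fec-rep} with $\overline{\rho}=\overline{\rho}_{E,p}$ at a prime $\mP\in T$, and to derive a contradiction from the incompatibility of their inertia images there. First I would record that the defining property \eqref{eq: property defining A_f}, together with Ohta's decomposition $\sigma_{A,p}\simeq\rho_{A,p}\oplus\rho_{A,p}$ and the fact that $\theta$ lifts $\ff$ and matches traces of $\overline{\rho}$, forces (for $p$ large, and after possibly replacing $\rho_{A,p}$ by a twist/conjugate) the residual representation $\overline{\rho}_{A,p}$ to be isomorphic to $\overline{\rho}$ up to semisimplification; since $\overline{\rho}$ is absolutely irreducible by Corollary~\ref{cor:surj} (we are assuming $p$ large), we get an honest isomorphism $\overline{\rho}_{A,p}\simeq\overline{\rho}$. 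In particular $\overline{\rho}_{A,p}(I_\mP)\cong\overline{\rho}(I_\mP)$.

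The next step is to bound $\#\overline{\rho}_{A,p}(I_\mP)$. By Theorem~\ref{thm:goodredn}, $A/K_\mP$ acquires good reduction over a totally ramified extension $K'/K_\mP$ of degree $d\mid 24$. Over $K'$ the $p$-adic Tate module is crystalline (or at least the reduction is good), so $\rho_{A,p}\vert_{G_{K'}}$ is unramified up to the part coming from good reduction; concretely, the image of inertia $\overline{\rho}_{A,p}(I_{K'})$ is trivial for $p$ unramified in $K$ and $\mP\nmid p$ — reduction mod $p$ of a $p$-adic representation with good reduction at $\mP\nmid p$ is unramified at $\mP$. Hence $\overline{\rho}_{A,p}(I_\mP)$ is a quotient of $I_\mP/I_{K'}$, which has order dividing $d\mid 24$. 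Therefore $\#\overline{\rho}_{A,p}(I_\mP)\mid 24$, and via the isomorphism, $\#\overline{\rho}(I_\mP)\mid 24$.

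This contradicts Lemma~\ref{lem:mPImage}(ii): for $p>4\ord_\mP(2)$ (which holds once $p$ is large, cf. Corollary~\ref{cor:surj}) we have $p\mid\#\overline{\rho}(I_\mP)$, so $\#\overline{\rho}(I_\mP)\geq p>24$. This is impossible, so the fake-elliptic-curve case cannot occur and $\ff$ must have an associated elliptic curve $E_\ff$. I would phrase the final inequality so that the hypothesis ``$p>24$'' in the statement is literally what is used at the last step, folding the largeness conditions ($p$ unramified in $K$, $\mP\nmid p$, $\overline{\rho}$ surjective, $p>4\ord_\mP(2)$) into the ambient running assumption that $p>B_K$.

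**Main obstacle.** The delicate point is the passage from ``$\ff$ has associated fake elliptic curve $A$ with property \eqref{eq: property defining A_f}'' to ``$\overline{\rho}_{A,p}\simeq\overline{\rho}$''. One must check that the $2$-dimensional piece $\rho_{A,p}$ (rather than the full $4$-dimensional $\sigma_{A,p}$) is the one matching $\ff$'s Hecke eigenvalues: from \eqref{eq: property defining A_f} one reads $\Tr\sigma_{A,p}(\Frob_\q)=2(1+\norm\q-\ff(T_\q))$ and $\det$, consistent with $\sigma_{A,p}=\rho_{A,p}^{\oplus 2}$ and $\Tr\rho_{A,p}(\Frob_\q)\equiv\ff(T_\q)\pmod{\mathfrak p}$; then Chebotarev plus absolute irreducibility of $\overline{\rho}$ upgrades the congruence of traces to an isomorphism of semisimplifications and hence to $\overline{\rho}_{A,p}^{\mathrm{ss}}\simeq\overline{\rho}$. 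I expect this to be the only place requiring care; the inertia bound via potential good reduction is standard once it is set up.
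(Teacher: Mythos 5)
Your proposal is correct and follows essentially the same route as the paper: take $\mP\in T$, use Theorem~\ref{thm:goodredn} (potential good reduction over a totally ramified extension of degree dividing $24$, with $\mP\nmid p$) to bound $\#\overline{\rho}_{A_\ff,p}(I_\mP)$ by $24$, and contradict Lemma~\ref{lem:mPImage}, which forces $p\mid\#\overline{\rho}_{E,p}(I_\mP)$. The paper simply asserts the identification $\overline{\rho}_{E,p}\sim\overline{\rho}_{A_\ff,p}$, so your extra care about upgrading the trace congruences to an isomorphism via Chebotarev and irreducibility is a reasonable filling-in of detail rather than a different argument.
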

\begin{proof}
This is another point where we make use of our assumption $T \ne \emptyset$.
Let $\mP \in T$. We know from Lemma~\ref{lem:mPImage} that
$p \mid \#\overline{\rho}_{E,p}(I_\mP)$. If $\ff$ corresponds
to a fake elliptic curve $A_\ff$, then it follows from Theorem~\ref{thm:goodredn}
that $\#\overline{\rho}_{A_\ff,p}(I_\mP) \le 24$ where $\rho_{A_\ff,p}$ is the $2$-dimensional 
representation defined in (\ref{eqn:fec-rep}). As 
$\overline{\rho}_{E,p} \sim \overline{\rho}_{A_\ff,p}$ we have a contradiction.
\end{proof}

We may henceforth suppose that $\overline{\rho}_{E,p} \sim \overline{\rho}_{E^\prime,p}$ where $E^\prime=E_\ff$ is an elliptic curve of conductor $\fN'$ dividing $\fN$.

\begin{lemma}\label{lem:FTT}
If $E^\prime$ does not have full $2$-torsion, and is not $2$-isogenous
to an elliptic curve with full $2$-torsion, then $p \le C_{E^\prime}$.
\end{lemma}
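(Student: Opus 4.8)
The plan is to exploit the constraints on $\overline{\rho}_{E',p}$ coming from the Frey curve together with rationality of the associated elliptic curve $E'$. We are assuming $\overline{\rho}_{E,p}\sim\overline{\rho}_{E',p}$, and from Lemma~\ref{lem:Serre} this representation is finite flat at every $\fq\mid p$ and unramified outside $S\cup\{\fp\}$, with determinant the mod $p$ cyclotomic character. In particular $E'$ has good reduction outside $S\cup\{\fp\}$ (it has conductor $\fN'$ dividing $\fN$), and it has semistable, indeed good or multiplicative, reduction at every prime above $p$ once $p$ is large. The hypothesis of the lemma is that $E'[2]$ does not contain a full set of $K$-rational $2$-torsion points, even after a $2$-isogeny. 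I would translate this into a statement about the mod $2$ representation $\overline{\rho}_{E',2}$: it is either irreducible, or reducible but not a sum of two trivial characters after twisting by an isogeny — equivalently, the image of $\overline{\rho}_{E',2}$ in $\GL_2(\F_2)\cong S_3$ is not contained in the trivial subgroup nor (after isogeny) is it forced to be; concretely the $2$-division field $K(E'[2])$ is a nontrivial extension of $K$ of degree $2$, $3$ or $6$.

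Next I would bring in the fact that $\overline{\rho}_{E,p}$ and hence $\overline{\rho}_{E',p}$ has big image: by Corollary~\ref{cor:surj}, for $p$ large $\overline{\rho}_{E,p}$ is surjective onto $\GL_2(\F_p)$. The strategy is then an Eisenstein-type / Kraus-type argument comparing the mod $2$ behaviour of $E'$ with the mod $p$ behaviour forced on $\overline{\rho}_{E,p}$ at auxiliary primes. Since $E$ is semistable away from $S\cup\{\fp\}$ and $p\mid\ord_\fq(\Delta_E)$ for such primes, at any prime $\fq$ of good reduction for $E'$ outside $S\cup\{\fp\}\cup\{\gp\mid p\}$ we have $a_\fq(E')\equiv\Tr\overline{\rho}_{E,p}(\Frob_\fq)\pmod{p}$, while $\Tr\overline{\rho}_{E,p}(\Frob_\fq)$ equals $a_\fq(E)$ or $\pm(\norm\fq+1)$. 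I would fix finitely many auxiliary primes $\fq_1,\dots,\fq_m$ of $K$ (depending only on $K$, chosen among primes splitting completely in the Galois closure, and not above $2$) that suffice to pin down the $2$-torsion field of an elliptic curve over $K$ of conductor dividing the finitely many possible $\fN$; for each possible $2$-torsion type one computes the Frobenius traces mod $2$, and these are incompatible mod $p$ with the allowed values $a_\fq(E)$, $\pm(\norm\fq+1)$ unless a certain explicit nonzero integer (depending only on $E'$, equivalently only on $K$) is divisible by $p$. This bounds $p$ by a constant $C_{E'}$.

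The cleanest way to package this is via the classical observation (going back to work of Kraus and Halberstadt–Kraus, and used in \cite{FS}) that if $\overline{\rho}_{E,p}\sim\overline{\rho}_{E',p}$ with $\overline{\rho}_{E,p}$ surjective, then for a prime $\fq$ at which $E$ is multiplicative and $E'$ has good reduction and $\fq\nmid 2p$, one gets $\norm\fq+1\equiv\pm a_\fq(E')\pmod p$; and if $E$ also has good reduction at $\fq$ then $a_\fq(E)\equiv a_\fq(E')\pmod p$. Running this at a well-chosen $\fq$ where $E'$ has a rational $2$-isogeny but not full rational $2$-torsion forces $a_\fq(E')$ to lie in a residue class mod $2$ incompatible with the generic behaviour, and the resulting congruence, being an equality of integers modulo $p$ whose two sides are not equal, yields $p\mid N$ for an explicit nonzero $N=N_{E'}$. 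Since there are only finitely many curves $E'$ in play (finitely many conductors dividing the finitely many possible $\fN$), taking $C_{E'}$ larger than all these $N$ works, and $C_{E'}$ depends only on $K$.

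The main obstacle I expect is ensuring that the auxiliary prime $\fq$ can be chosen uniformly — that is, that there genuinely exists a prime $\fq\notin S\cup\{\fp\}$, $\fq\nmid p$, at which $E'$ has good reduction and the Frobenius trace $a_\fq(E')$ is forced into the wrong parity by the failure of $E'$ to acquire full $2$-torsion after a $2$-isogeny — and that this $\fq$ works simultaneously for all the finitely many candidate curves $E'$, or at worst that for each $E'$ such a $\fq$ exists and gives a finite bound. This requires a short Chebotarev argument in $K(E'[2])/K$: the nontrivial conjugacy classes in $\Gal(K(E'[2])/K)$ give primes $\fq$ with $a_\fq(E')$ odd, and one must check these are compatible with the constraint that $\fq$ avoid $S\cup\{\fp\}$ and $p$ — which is automatic for all but finitely many such $\fq$. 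Once the existence of a suitable $\fq$ is secured, the rest is the bookkeeping of a congruence between integers, and the bound $C_{E'}$ drops out. I would also need to handle the ``$2$-isogenous'' clause: one runs the argument on $E'$ and on its at most two or three $2$-isogenous curves, noting that a $2$-isogeny changes $a_\fq$ in a controlled way, so the hypothesis ``$E'$ is not $2$-isogenous to a curve with full $2$-torsion'' is exactly what is needed to rule out all cases.
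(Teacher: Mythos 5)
There is a genuine gap, and it lies exactly at the point your proposal treats most casually: the case where $E^\prime$ has precisely one $K$-rational point of order $2$ (so $\overline{\rho}_{E^\prime,2}$ is reducible) but neither $E^\prime$ nor any $2$-isogenous curve has full $2$-torsion. Your Chebotarev argument in $K(E^\prime[2])/K$ only produces primes $\fq$ with $a_\fq(E^\prime)$ odd when the mod $2$ representation is irreducible, i.e.\ when $E^\prime$ has no rational $2$-torsion at all. In the problematic intermediate case, $a_\fq(E^\prime)$ is even and $2 \mid \#E^\prime(\F_\fq)$ for \emph{every} good prime $\fq \nmid 2$, exactly as for the Frey curve $E$, so no parity incompatibility with $a_\fq(E)$ or with $\pm(\norm(\fq)+1)$ ever arises and no nonzero integer divisible by $p$ is produced. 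Your suggested fix --- running the argument over the $2$-isogeny class because ``a $2$-isogeny changes $a_\fq$ in a controlled way'' --- cannot work: isogenous curves have identical $a_\fq$ at all good primes, so trace data alone cannot distinguish $E^\prime$ from its $2$-isogenous curves, which is precisely why the lemma is stated with the $2$-isogeny clause.

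What is actually needed is information modulo $4$ on the group order, not modulo $2$ on the trace. The paper's proof invokes Katz's theorem (Lemma~\ref{lem:2tors}): if $4 \mid \#E^\prime(\F_\fq)$ for all primes of sufficiently large norm, then $E^\prime$ is isogenous (indeed $2$-isogenous, after the short argument given there) to a curve with full $2$-torsion. Under the hypothesis of Lemma~\ref{lem:FTT} this yields infinitely many primes $\fq \notin S \cup \{\fp\}$ with $\#E^\prime(\F_\fq) \not\equiv 0 \pmod 4$. Since the Frey curve has full $2$-torsion, $4 \mid \#E(\F_\fq)$ at any such prime of good reduction, so $\#E(\F_\fq)-\#E^\prime(\F_\fq)$ is a nonzero integer divisible by $p$ and bounded in terms of $\norm(\fq)$; if instead $E$ is multiplicative at $\fq$ one compares $\pm(\norm(\fq)+1)$ with $a_\fq(E^\prime)$, nonzero by the Hasse bound. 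So the overall congruence bookkeeping in your proposal is the right shape (and your worry about choosing $\fq$ uniformly is unnecessary, since the constant is allowed to depend on $E^\prime$ and there are finitely many candidates), but without the Katz input --- or an equivalent Chebotarev argument in $K(E^\prime[4])/K$ rather than $K(E^\prime[2])/K$ --- the proof does not cover all curves satisfying the hypothesis.
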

\begin{proof}
By Lemma~\ref{lem:2tors} (below)
there infinitely many primes 
$\fq$
such that $\# E^\prime(\F_\fq) \not\equiv 0 \pmod{4}$. Fix
such a prime $\fq \notin S \cup \{\fp\}$. Now
if $\fq$ is a prime of good reduction for $E$, then
$\#E(\F_\fq) \equiv E^\prime(\F_\fq) \pmod{p}$. Note that $\#E(\F_\fq)$
is divisible by $4$ as the Frey curve $E$ has full $2$-torsion.
Thus the difference $\# E(\F_\fq)-\#E^\prime(\F_\fq)$, which is
divisible by $p$, is non-zero. Moreover, this difference belongs to
a finite set depending on $\fq$, and so $p$ is bounded. We may
therefore suppose that $E$ has multiplicative reduction at $\fq$.
In this case, comparing traces of Frobenius at $\fq$ we have
\[
\pm (\norm(\fq)+1) \equiv a_\fq(E^\prime) \pmod{p}.
\]
Again the difference is non-zero and depends only on $\fq$,
giving a bound for $p$.
\end{proof}
If $E^\prime$ is $2$-isogenous to an elliptic curve $E^{\prime\prime}$
then (as $p \ne 2$) then the isogeny induces
an isomorphism $E^\prime[p] \cong E^{\prime\prime}[p]$ of Galois modules.
Thus $\overline{\rho}_{E,p} \sim \overline{\rho}_{E^\prime,p}
\sim \overline{\rho}_{E^{\prime\prime},p}$. Hence we may,
after possibly replacing $E^\prime$ by $E^{\prime\prime}$, suppose
that $E^\prime$ has full $2$-torsion. To complete the proof the 
proposition, we need to show that $E^{\prime}$ has potentially good
reduction at $\fp$, and that $\ord_\mP(j^\prime)<0$ for 
$\mP \in T$. Recall by Lemmas~\ref{lem:mPImage} and ~\ref{lem:mimage}
that $p \mid \# \overline{\rho}_{E,p}(I_\mP)$ for $\mP \in T$,
and that $\# \overline{\rho}_{E,p}(I_\fp) \le 24$. 
As $\# \overline{\rho}_{E,p}(I_\mP)= \# \overline{\rho}_{E^\prime,p}(I_\mP)$
we deduce from Lemma~\ref{lem:inertiaGeneral} that $\ord_\mP(j^\prime)<0$
for $\mP \in T$.
Finally if $E^\prime$ has potentially multiplicative reduction at $\fp$
then for every $p> \lvert \ord_\fp(j^\prime) \rvert$ we have,
by Lemma~\ref{lem:inertiaGeneral}, that 
$p \mid \# \overline{\rho}_{E^\prime,p}(I_\fp)$, giving a contradiction
for large $p$. 

\subsection{$2$-Torsion of Elliptic Curves}\label{sec:2tors}
To complete the proof Lemma~\ref{lem:FTT} we need
the following result which is stated as a fact in
\cite[Section 3]{KrausFermat}. We are grateful to 
Nicolas Billerey for pointing out that this is 
a special case of a theorem of Katz
\cite{Katz}. 
\begin{lemma}\label{lem:2tors}
Let $E$ be an elliptic curve over a number field $K$.
Suppose that $4 \mid \#E(\F_\fq)$ for all primes $\fq$
of sufficiently large norm.
Then either $E$ has full $2$-torsion, or it
is  $2$-isogenous to some elliptic curve $E^\prime$
having full $2$-torsion.
\end{lemma}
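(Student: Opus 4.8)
The plan is to analyse the mod‑$4$ representation of $E$ together with the Chebotarev density theorem (this is also a special case of Katz's theorem \cite{Katz}, but the argument below is self‑contained). The starting point is an elementary dictionary: for a prime $\fq \nmid 2$ of good reduction, reduction at $\fq$ identifies $E[4]$ with the $4$‑torsion of the reduced curve compatibly with Frobenius, so $\#E(\F_\fq)[4] = \#E[4]^{\Frob_\fq}$; and writing $E(\F_\fq) \cong \Z/n \times \Z/nm$ one checks directly that $4 \mid \#E(\F_\fq)$ precisely when $\#E[4]^{\Frob_\fq} \ge 4$. Since $\#E[4]^g$ depends only on the conjugacy class of $g$, Chebotarev converts the hypothesis into a purely group‑theoretic statement: every $g$ in $G := \Gal(K(E[4])/K) \subseteq \GL_2(\Z/4)$ satisfies $\#E[4]^g \ge 4$.

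Next I would exploit this constraint. If the image $\bar G$ of $G$ in $\GL_2(\F_2) = \Aut(E[2])$ contained an element of order $3$, that element would act on $E[2]$ without nonzero fixed vector, hence invertibly on $E[4]$, forcing $\#E[4]^g = 1$; so $\bar G$ has no $3$‑cycles, i.e.\ $\bar G$ is trivial or of order $2$. If $\bar G$ is trivial then $E[2]$ is $K$‑rational and $E$ has full $2$‑torsion. Otherwise $\bar G = \langle \bar\tau \rangle$ for a transposition $\bar\tau$; fix a $\Z/4$‑basis of $E[4]$ reducing to an $\F_2$‑basis of $E[2]$ in which $\bar\tau = \left(\begin{smallmatrix}1&1\\0&1\end{smallmatrix}\right)$. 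A short matrix computation in $\GL_2(\Z/4)$ shows that any $g \in G$ lying over $\bar\tau$ with nonzero lower‑left entry would have $\#E[4]^g = 2$, which is excluded; and if some $g \in G$ lying over the identity had nonzero lower‑left entry, then multiplying it by an element lying over $\bar\tau$ (which by the previous sentence is upper triangular) would produce an element over $\bar\tau$ with nonzero lower‑left entry, again excluded. Hence every element of $G$ is upper triangular in this basis, so $G$ stabilises the cyclic order‑$4$ subgroup $\ell \subset E[4]$ spanned by the first basis vector.

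Finally, since $\ell$ is $G_K$‑stable, $P := 2\ell \subset E[2]$ is a $G_K$‑stable subgroup of order $2$, and $\phi \colon E \to E' := E/\langle P \rangle$ is a $2$‑isogeny defined over $K$ with dual $\hat\phi$. Then $\phi(\ell)$ is a $G_K$‑stable order‑$2$ subgroup of $E'[2]$ (it has order $\#\ell / \#(\ell \cap \ker\phi) = 2$ since $\ker\phi = 2\ell \subset \ell$), and it is distinct from $\ker\hat\phi$ because $\hat\phi(\phi(\ell)) = 2\ell = \langle P \rangle \ne 0$. Thus $E'[2]$ contains two independent $K$‑rational points, i.e.\ $E'$ has full $2$‑torsion, as required.

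The only genuinely delicate step is the group‑theoretic one: the hypothesis directly constrains only the elements of $G$ over the transposition $\bar\tau$, and one must use the group law to propagate this and conclude that $G$ preserves a line in the whole module $E[4]$, not merely in $E[2]$ — the latter would only produce a $K$‑rational $2$‑torsion point on $E$, not full $2$‑torsion on $E'$. The point‑count modulo $4$, the Chebotarev input, and the isogeny bookkeeping are all routine. (Alternatively, one can simply invoke \cite{Katz}.)
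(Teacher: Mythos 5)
Your proof is correct, and it takes a genuinely different route from the one the paper gives. I checked the key steps: the dictionary $4 \mid \#E(\F_\fq) \Leftrightarrow \#E[4]^{\Frob_\fq}\ge 4$ for good primes $\fq\nmid 2$; that any $g$ whose mod-$2$ reduction has order $3$ satisfies $\#E[4]^g=1$ (since $\det(g-1)$ is odd); that an element over $\bar\tau=\left(\begin{smallmatrix}1&1\\0&1\end{smallmatrix}\right)$ with lower-left entry $2$ fixes exactly two points of $E[4]$ (so is excluded); the propagation to elements over the identity by multiplying with an element over $\bar\tau$, which by the previous step is upper triangular so the product has lower-left entry $2d'\ne 0$; and the final bookkeeping showing $\phi(\ell)$ and $\ker\hat\phi$ are distinct $G_K$-stable order-$2$ subgroups of $E'[2]$. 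The paper instead quotes Katz's theorem to obtain a curve isogenous to $E$ with $4\mid$ the order of its $K$-rational torsion, and then argues with a rational point $P$ of order $4$: the three $2$-torsion classes of the quotient by $\langle 2P\rangle$ are individually Galois-fixed. Comparing the two: the Katz route is much shorter, but as written it yields a full-$2$-torsion curve that is $2$-isogenous to the auxiliary curve supplied by Katz and hence a priori only isogenous to $E$ — harmless for the application in Lemma~\ref{lem:FTT}, yet slightly weaker than Lemma~\ref{lem:2tors} as literally stated — whereas your argument is self-contained (no appeal to Katz, only Chebotarev) and delivers the $2$-isogeny from $E$ itself; incidentally, the paper's source contains a commented-out elementary argument in the same Chebotarev/mod-$4$ spirit as yours, though organised around a rational $2$-torsion point rather than fixed-point counts. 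One cosmetic slip: \lq\lq hence invertibly on $E[4]$\rq\rq\ should read \lq\lq hence $g-1$ acts invertibly on $E[4]$\rq\rq.
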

\begin{proof}
By \cite[Theorem 2]{Katz} there is an elliptic curve $E^\prime/K$
isogenous to $E$ such that $4 \mid \#E^\prime(K)_{\mathrm{tors}}$.
If $E^\prime$ has full $2$-torsion then we are finished.
Otherwise $E^\prime$ has some $K$-point $P$ of order $4$. 
The points of order $2$ on $E^\prime$ are $2P$, $Q$, $R$ (say)
where $Q$ and $R$ are Galois conjugates, related by $R=Q+2P$. The points
of order $2$ on  the $2$-isogenous curve 
$E^\prime/\langle 2P \rangle$ are $P+\langle 2P\rangle$,
$Q+\langle 2P \rangle=R+\langle 2P \rangle$ and $P+Q+\langle 2P \rangle$.
These are clearly individually fixed by the action of $G_K$. 
\end{proof}

\section{Proof of Theorem~\ref{thm:FermatGen}}

 We apply
Proposition~\ref{prop:elliptic} which yields an elliptic curve $E^\prime/K$
with full $2$-torsion and potentially good reduction outside $S$
whose $j$-invariant $j^\prime$ satisfies 
$\ord_\mP(j^\prime) <0$ for all $\mP \in T$.
Write
\[
E^\prime \; : \; Y^2=X(X-e_1)(X-e_2)
\]
with $e_1$, $e_2 \in \ZK$. Let $\lambda=e_1/e_2$. Let $\lambda^\prime$
be any of the following six expressions (which are known
as the $\lambda$-invariants of $E^\prime$):
\[
\lambda, \quad 1/\lambda, \quad 1-\lambda, 
\quad 1/(1-\lambda),\quad 
\lambda/(\lambda-1),\quad
(\lambda-1)/\lambda.
\]
Then
\begin{equation}\label{eqn:jp}
j^\prime=2^8 \cdot \frac{({\lambda^\prime}^2-\lambda^\prime+1)^3}{{\lambda^\prime}^2 (1-\lambda^\prime)^2} \, .
\end{equation}
Let $\fq \notin S$ be a prime of $K$. As $E^\prime$ has potentially good
reduction at $\fq$, we know that $\ord_\fq(j^\prime) \ge 0$.
Thus $\lambda^\prime$ is the root of a degree six monic polynomial
with coefficients that are $\fq$-integral.
It immediately follows that $\ord_\fq(\lambda^\prime) \ge 0$. This is true
for both $\lambda^\prime=\lambda$ and $\lambda^\prime=1/\lambda$, 
thus $\lambda \in \OO_S^\times$. Moreover, letting $\mu=1-\lambda$ we see
that $\mu \in \OO_S^\times$, hence $(\lambda,\mu)$ is a solution to the
the $S$-unit equation \eqref{eqn:sunit}.
Suppose, as in the statement of the theorem, that for every
such solution $(\lambda,\mu)$ there some $\mP \in T$
such that 
$t:=\max\{\lvert \ord_\mP(\lambda)\rvert, \lvert \ord_\mP(\mu)\rvert\}
\le \ord_\mP(2)$. If $t=0$ then it follows from \eqref{eqn:jp}
with $\lambda^\prime=\lambda$ that $\ord_\mP(j^\prime)>0$
giving a contradiction. Thus $t>0$. 
Now the relation $\lambda+\mu=1$
forces either $\ord_\mP(\lambda)=\ord_\mP(\mu)=-t$, 
or $\ord_\mP(\lambda)=0$ and $\ord_\mP(\mu)=t$, or $\ord_\mP(\lambda)=t$ and $\ord_\mP(\mu)=0$.
Thus $\ord_\mP(\lambda \mu)=-2t<0$ or $\ord_\mP(\lambda \mu)=t>0$.
But
\[
j^\prime
=2^8 \cdot (1-\lambda \mu)^3 \cdot (\lambda \mu)^{-2} \, ,
\]
which shows, either way, that $\ord_\mP(j^\prime) = 8 \ord_\mP(2)-2t \ge 0$
giving a contradiction. This completes the proof.


\end{document}